\newtheorem{theorem}{Theorem}
\newtheorem{lemma}[theorem]{Lemma}
\newtheorem{proposition}[theorem]{Proposition}
\theoremstyle{definition}
\theoremstyle{remark}
\numberwithin{equation}{section}
\newcommand{\D}{\mathbb{D}}
\newcommand{\DD}{\widehat{\mathcal{D}}}
\newcommand{\N}{\mathbb{N}}
\newcommand{\R}{\mathbb{R}}
\renewcommand{\phi}{\varphi}
\newcommand{\T}{\mathbb{T}}
\def\a{\alpha}       \def\b{\beta}        \def\g{\gamma}
     \def\om{\omega}      
       \def\t{\theta}       
                  \def\z{\zeta}
                  \def\vp{\varphi}
\def\R{{\mathcal R}}
\def\I{{\mathcal I}}
\newcommand{\ho}{\widehat{\omega}}
\renewcommand{\H}{\mathcal{H}}
\begin{document}

\title[ On the boundedness of  Bergman projection]{On the boundedness of  Bergman projection}

\keywords{Bergman space, Bergman Projection, equivalent norms, doubling weights.}

\thanks{This research was supported in part by the Ram\'on y Cajal program
of MICINN (Spain); by Ministerio de Edu\-ca\-ci\'on y Ciencia, Spain, projects
MTM2011-25502 and MTM2011-26538;  by   La Junta de Andaluc{\'i}a, (FQM210) and
(P09-FQM-4468);  by Academy of Finland project no. 268009,  by V\"ais\"al\"a Foundation of Finnish Academy of Science and Letters, and by Faculty of Science and Forestry of University of Eastern Finland project no. 930349.
}

\author{Jos\'e \'Angel Pel\'aez}
\address{Departamento de An\'alisis Matem\'atico, Universidad de M\'alaga, Campus de
Teatinos, 29071 M\'alaga, Spain} \email{japelaez@uma.es}

\author{Jouni R\"atty\"a}
\address{University of Eastern Finland, P.O.Box 111, 80101 Joensuu, Finland}
\email{jouni.rattya@uef.fi}
\date{\today}


\begin{abstract}

The main  purpose of this survey is to gather results on the boundedness of the Bergman projection. First, we shall
go over some equivalent norms on weighted Bergman spaces $A^p_\om$ which are useful in the study of this question.
 In particular,  we shall focus  on  a decomposition norm theorem for
 radial weights~$\omega$
with the doubling property $\int_{r}^1\om(s)\,ds\le C\int_{\frac{1+r}{2}}^1\om(s)\,ds$.
\end{abstract}

\maketitle
\section{Introduction}
Let $\H(\D)$ be the space  of all analytic functions in the unit disc $\D=\{z:|z|<1\}$.
If $0<r<1\,$ and $f\in \H (\D)$, set
    \begin{equation*}
    \begin{split}
    M_p(r,f)&=\left(\frac{1}{2\pi}\int_{0}^{2\pi} |f(re^{it})|^p\,dt\right)^{1/p},\quad
    0<p<\infty,\\
    M_\infty(r,f)&=\sup_{|z|=r}|f(z)|.
    \end{split}
    \end{equation*}
For $0<p\le \infty $, the Hardy space $H^p$ consists of functions
$f\in \H(\mathbb D)$ such that $\Vert f\Vert _{H^p}=
\sup_{0<r<1}M_p(r,f)<\infty$.
A  nonnegative integrable function $\om$ on the unit disc $\D$ is called a weight.
 It is radial if $\omega(z)=\omega(|z|)$ for all $z\in\D$.
 For
$0<p<\infty$ and a weight $\omega$, the weighted Bergman
space $A^p_\omega$ is the space of  $f\in\H(\D)$ for
which
    $$
    \|f\|_{A^p_\omega}^p=\int_\D|f(z)|^p\omega(z)\,dA(z)<\infty,
    $$
where $dA(z)=\frac{dx\,dy}{\pi}$ is the normalized
Lebesgue area measure on $\D$. That is, $A^p_\om=L^p_\om\cap \H(\D)$ where $L^p_\om$ is the corresponding
weighted Lebesgue space.
As usual, we write~$A^p_\alpha$ for the standard weighted
Bergman space induced by
the  radial weight $(1-|z|^2)^\alpha$, where
$-1<\alpha<\infty$ \cite{DurSchus,HKZ,Zhu}.  We denote $dA_\a=(\a+1)(1-|z|^2)^\alpha\,dA(z)$ and $\om(E)=\int_{E}\om(z)\,dA(z)$ for short.
The Carleson square $S(I)$ based on an
interval $I\subset\T$ is the set $S(I)=\{re^{it}\in\D:\,e^{it}\in I,\,
1-|I|\le r<1\}$, where $|E|$ denotes the Lebesgue measure of $E\subset\T$. We associate to
each $a\in\D\setminus\{0\}$ the interval
$I_a=\{e^{i\t}:|\arg(a e^{-i\t})|\le\frac{1-|a|}{2}\}$, and denote
$S(a)=S(I_a)$.
\medskip \par
If the norm convergence in the Bergman space $A^2_\om$ implies the uniform convergence on compact subsets, then
 the point
evaluations $L_ z$ (at the point $z\in \D$) are bounded linear
functionals on $A^2_\om$. Therefore, there are reproducing kernels
$B^\om_{z}\in A^2_\om$ with $\|L_ z\|=\|B^\om_{z}\|_{A^2_\om}$ such that
\begin{displaymath}
L_ z f=f(z)=\langle f, B^\om_{z}\rangle_{A^2_\om} =\int_{\D} f(\z)\,\overline{B^\om_{z}(\z)}\,\om(\z)\,dA(\z), \quad f\in A^2_\om.
\end{displaymath}
Since $A^2_\om$ is a closed subspace of $L^2_\om$,  we may consider the orthogonal Bergman projection $P_\om$ from $L^2_\om$ to $A^2_\om$,
that is usually called the
 Bergman projection. It is  the integral operator
    \begin{equation*}\label{intoper}
    P_\om(f)(z)=\int_{\D}f(\z)\overline{B^\om_{z}(\z)}\,\om(\z)dA(\z).
    \end{equation*}
\par The main purpose of these lectures is to gather results on the inequality
    \begin{equation}\label{twoweight}
    \|P_\om(f)\|_{L^p_v}\le C\|f\|_{L^p_v}.
    \end{equation}
We shall also provide a collection of equivalent norms on $A^p_\om$ which have been used to study this problem.
A solution  for \eqref{twoweight} is known for the class of standard weights $\om(z)=(1-|z|^2)^\alpha$ and $1<p<\infty$;
 \begin{equation*}\label{bpa}
    P_\a(f)(z)=(\a+1)\int_\D \frac{f(\z)(1-|\z|^2)^\a}{(1-z\overline{\z})^{2+\a}}\,dA(\z),\quad \a>-1,
    \end{equation*}
    is bounded on $L^p_v$ if and only if  $\frac{v(z)}{(1-|z|^2)^\a}$ belongs to the Bekoll\'e-Bonami class
$B_{p}(\a)$ \cite{BB,B}. We remind the reader that $v\in B_{p}(\a)$ if
\begin{equation}\label{eq:BB}
    \begin{split}
    &B_{p,\alpha}(v)=\sup_{I\subset\T}\frac{\left(\int_{S(I)}v(z)\,dA_\a(z)\right)
    \left(\int_{S(I)}v(z)^{\frac{-p'}{p}}\,dA_{\alpha}(z)\right)^{\frac{p}{p'}}}
    {A_\a(S(I))^p}<\infty.
    \end{split}
    \end{equation}
It is worth mentioning that the above result remains true replacing $P_\a$ by its sublinear positive counterpart
 \begin{equation*}\label{bpa+}
    P^+_\a(f)(z)=(\a+1)\int_\D \frac{|f(\z)|(1-|\z|^2)^\a}{|1-z\overline{\z}|^{2+\a}}\,dA(\z).
  \end{equation*}
Roughly speaking, this means that cancellation does not play an essential role in this question.
\medskip\par The situation is completely different when $\om$ is not a standard  weight, because of the lack of explicit expressions for
the Bergman reproducing kernels $B^\om_z$. If $\om$ is a admissible  radial weight, then the normalized monomials
$\frac{z^n}{\sqrt{2\int_0^1 r^{2n+1}\om(r)\,dr}},\, n\in\N\cup\{0\},$
form the standard  orthonormal basis of $A^2_\om$ and then \cite[Theorem~4.19]{Zhu} yields
\begin{equation}\label{repker}
B^\om_z(\z)=\sum_{n=0}^\infty\frac{(\z\bar{z})^n}{2\int_0^1 r^{2n+1}\om(r)\,dr},\quad z,\z\in\D.
\end{equation}
\par This formula and a decomposition norm theorem has been used recently in order to obtain precise estimates for the $L^p_v$-integral of~$B^\om_{z}$, see Theorem~\ref{th:kernelstimate} below. This is a key to tackle the two weight inequality \eqref{twoweight}  when $\om$ and $v$ belong to a certain class of radial weights \cite{PelRatproj}.
\par If $\om$ is not necessarily radial, the theory of weighted Bergman spaces is at its early
stages, and plenty of essential properties such as the density of polynomials (polynomials
may not be dense in $A^p_\om$ if $\om$ is not radial,  \cite[Section $1.5$]{PelRat} or \cite[p.~138]{DurSchus}) have not been described yet. Because of this fact, from now on we shall be mainly focused on Bergman spaces induced by radial weights.

Throughout the paper  $\frac{1}{p}+\frac{1}{p'}=1$. Further, the letter $C=C(\cdot)$ will denote an
absolute constant whose value depends on the parameters indicated
in the parenthesis, and may change from one occurrence to another.
We will use the notation $a\lesssim b$ if there exists a constant
$C=C(\cdot)>0$ such that $a\le Cb$, and $a\gtrsim b$ is understood
in an analogous manner. In particular, if $a\lesssim b$ and
$a\gtrsim b$, then we will write $a\asymp b$.

\section{Background on radial weights}

We shall write $\DD$ for the class of radial weights such that $\widehat{\om}(z)=\int_{|z|}^1\om(s)\,ds$ is doubling, that is, there exists $C=C(\om)\ge1$ such that $\widehat{\om}(r)\le C\widehat{\om}(\frac{1+r}{2})$ for all $0\le r<1$.
We call a radial weight $\om$ regular, denoted by $\om\in\R$, if $\om\in\DD$
and  $\om(r)$ behaves as its integral average over $(r,1)$, that is,
    \begin{equation*}
    \om(r)\asymp\frac{\int_r^1\om(s)\,ds}{1-r},\quad 0\le r<1.
    \end{equation*}
    As to concrete examples, we mention that every standard weight as well as  those given in
\cite[(4.4)--(4.6)]{AS} are regular.
It is clear that  $\om\in\R$ if and only if for each $s\in[0,1)$ there exists a
constant $C=C(s,\omega)>1$ such that
    \begin{equation}\label{eq:r2}
    C^{-1}\om(t)\le \om(r)\le C\om(t),\quad 0\le r\le t\le
    r+s(1-r)<1,
    \end{equation}
and 
\begin{equation*}
   \frac{\int_r^1\om(s)\,ds}{1-r}\lesssim \om(r),\quad0\le r<1.
    \end{equation*}
The definition of regular weights used here is slightly more general than that in \cite{PelRat}, but the main
 properties  are essentially the same by Lemma~\ref{Lemma:replacement-Lemmas-Memoirs} below and \cite[Chapter~1]{PelRat}.
 \par A radial continuous weight $\om$ is called rapidly increasing, denoted by $\om\in\I$, if
    \begin{equation*}
    \lim_{r\to 1^-}\frac{\int_r^1\om(s)\,ds }{\om(r)(1-r)}=\infty.
    \end{equation*}
It follows from \cite[Lemma~1.1]{PelRat} that $\I\subset\DD$.
Typical examples of rapidly increasing
weights are
    \begin{equation*}\label{eq:def-of-v_alpha}
    v_\a(r)=\left((1-r)\left(\log\frac{e}{1-r}\right)^\a\right)^{-1},\quad 1<\a<\infty.\index{$v_\a(r)$}
    \end{equation*}
Despite their name,  rapidly increasing weights may admit a
strong oscillatory behavior. Indeed,  the weight
    \begin{equation*}\label{pesomalo1}
    \omega(r)=\left|\sin\left(\log\frac{1}{1-r}\right)\right|v_\alpha(r)+1,\quad
    1<\alpha<\infty,
    \end{equation*}
belongs to $\I$ but it does not satisfy
\eqref{eq:r2} \cite[p.~7]{PelRat}.
\par A radial continuous weight $\om$ is called rapidly decreasing 
if  $
    \lim_{r\to 1^-}\frac{\int_r^1\om(s)\,ds }{\om(r)(1-r)}=0.$ 
The exponential type weights $
    \om_{\gamma,\alpha}(r)=(1-r)^{\gamma}\exp
    \left(\frac{-c}{(1-r)^\alpha}\right), \,\gamma\ge0,\,
    \alpha,c>0,
    $
are  rapidly decreasing.
For further information on these classes see~\cite[Chapter~1]{PelRat} and the references therein.

 The following characterizations of the class $\DD$  will be frequently used from here on.
\begin{lemma}\label{Lemma:replacement-Lemmas-Memoirs}
Let $\om$ be a radial weight. Then the following conditions are equivalent:
\begin{itemize}
\item[\rm(i)] $\om\in\DD$;
\item[\rm(ii)] There exist $C=C(\om)>0$ and $\b_0=\b_0(\om)>0$ such that
    \begin{equation*}\label{Eq:replacement-Lemma1.1}
    \begin{split}
    \widehat{\om}(r)\le C\left(\frac{1-r}{1-t}\right)^{\b}\widehat{\om}(t),\quad 0\le r\le t<1,
    \end{split}
    \end{equation*}
for all $\b\ge\b_0$;
\item[\rm(iii)] There exist $C=C(\om)>0$ and $\gamma_0=\gamma_0(\om)>0$ such that
    \begin{equation*}\label{Eq:replacement-Lemma1.2}
    \begin{split}
    \int_0^t\left(\frac{1-t}{1-s}\right)^\g\om(s)\,ds
    \le C\widehat{\om}(t),\quad 0\le t<1,
    \end{split}
    \end{equation*}
for all $\g\ge\g_0$;
\item[\rm(iv)] There exists $C=C(\om)>0$ such that
    \begin{equation*}
    \begin{split}
    \int_0^ts^{\frac1{1-t}}\om(s)\,ds
    \le C\widehat{\om}(t),\quad 0\le t<1.
    \end{split}
    \end{equation*}
\item[\rm(v)] There exists $C=C(\om)>0$ such that
    $$
    \widehat{\om}(r)\le Cr^{-\frac{1}{1-t}}\widehat{\om}(t),\quad 0\le r\le t<1.
    $$
\item[\rm(vi)] The asymptotic equality
    $$
    \om_x=\int_0^1s^x\om(s)\,ds\asymp\widehat{\om}\left(1-\frac1x\right),
    $$
is valid for any  $x\ge 1$.
\end{itemize}
\end{lemma}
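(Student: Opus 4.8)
The plan is to treat (i) as a hub: I would prove the forward chain (i) $\Rightarrow$ (ii) $\Rightarrow$ (iii) $\Rightarrow$ (iv), and then close the circle through the two returns (iv) $\Rightarrow$ (v) $\Rightarrow$ (i) and (iv) $\Rightarrow$ (vi) $\Rightarrow$ (i). Since (ii) $\Rightarrow$ (i) is immediate (put $t=\frac{1+r}{2}$, so that $\frac{1-r}{1-t}=2$), this arrangement shows that each of (ii)--(vi) implies (i) while (i) implies all of them, giving the full equivalence.

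For (i) $\Rightarrow$ (ii) I would iterate the doubling condition along the dyadic sequence $r_0=r$, $r_{k+1}=\frac{1+r_k}{2}$, for which $1-r_k=2^{-k}(1-r)$ and hence $\widehat{\om}(r)\le C^k\widehat{\om}(r_k)$. Given $r\le t<1$, choosing the least $k$ with $r_k\ge t$ (so that $\widehat{\om}(r_k)\le\widehat{\om}(t)$ and $2^k\le 2\frac{1-r}{1-t}$) turns this into $\widehat{\om}(r)\le C\big(\frac{1-r}{1-t}\big)^{\log_2 C}\widehat{\om}(t)$, which is (ii) with $\b_0=\log_2 C$ and any $\b\ge\b_0$ since the base exceeds $1$. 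The step (ii) $\Rightarrow$ (iii) is the computational heart: writing $\om(s)=-\widehat{\om}'(s)$ and integrating by parts gives
\[
\int_0^t\Big(\tfrac{1-t}{1-s}\Big)^\g\om(s)\,ds
=-\widehat{\om}(t)+(1-t)^\g\widehat{\om}(0)
+\g(1-t)^\g\int_0^t\frac{\widehat{\om}(s)}{(1-s)^{\g+1}}\,ds.
\]
Bounding $\widehat{\om}(s)\le C\big(\frac{1-s}{1-t}\big)^\b\widehat{\om}(t)$ by (ii) and choosing $\g>\b$ makes the remaining integral $\int_0^t(1-s)^{\b-\g-1}\,ds\asymp\frac1{\g-\b}(1-t)^{\b-\g}$, so that all three terms are $\lesssim\widehat{\om}(t)$ once $\g\ge\g_0:=2\b$.

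The implication (iii) $\Rightarrow$ (iv) rests on the elementary comparison $s^{1/(1-t)}\le e^{-(1-s)/(1-t)}\le C_\g\big(\frac{1-t}{1-s}\big)^\g$, valid because $\log s\le s-1$ and $u^\g e^{-u}$ is bounded for $u\ge1$; integrating against $\om$ transfers (iii) to (iv). For (iv) $\Rightarrow$ (v) I would restrict the integral in (iv) to $[r,t]$, where $s^{1/(1-t)}\ge r^{1/(1-t)}$, obtaining $r^{1/(1-t)}\big(\widehat{\om}(r)-\widehat{\om}(t)\big)\le C\widehat{\om}(t)$ and hence (v). For the upper bound in (iv) $\Rightarrow$ (vi) I split $\om_x$ at $t=1-\frac1x$: the piece $\int_0^t s^{1/(1-t)}\om$ is $\le C\widehat{\om}(t)$ by (iv) and $\int_t^1 s^x\om\le\widehat{\om}(t)$, while the matching lower bound $\om_x\ge t^x\widehat{\om}(t)\gtrsim\widehat{\om}(t)$ is immediate since $(1-\frac1x)^x$ is bounded below for $x\ge2$ (the range $1\le x\le2$ being reduced to $\widehat{\om}(0)$ by elementary comparison).

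Both returns to (i) are then short. In (v) $\Rightarrow$ (i), taking $t=\frac{1+r}{2}$ gives $\widehat{\om}(r)\le Cr^{-2/(1-r)}\widehat{\om}(\frac{1+r}{2})$, and $r^{-2/(1-r)}\to e^2$ as $r\to1^-$ so the factor stays bounded on $[\frac12,1)$; for $r<\frac12$ the two tails are trapped between $\widehat{\om}(0)$ and $\widehat{\om}(\frac34)>0$. In (vi) $\Rightarrow$ (i) the key observation is that (vi) forces the moment-doubling $\om_x\lesssim\om_{2x}$: indeed $\om_{2x}\ge(1-\frac1x)^{2x}\widehat{\om}(1-\frac1x)\gtrsim\widehat{\om}(1-\frac1x)\gtrsim\om_x$ for $x\ge2$, and setting $x=\frac{1}{1-r}$ converts this, via (vi) applied at $x$ and at $2x$, into $\widehat{\om}(r)\lesssim\widehat{\om}(\frac{1+r}{2})$. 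The main obstacle I anticipate is the bookkeeping in (ii) $\Rightarrow$ (iii)---keeping the three boundary and integral terms correctly signed and choosing $\g_0$ so that the power $(1-t)^{\b-\g}$ cancels---together with the routine but necessary care, in each return, of checking that the exponential prefactors $r^{-2/(1-r)}$ and $(1-\frac1x)^{2x}$ remain bounded and of disposing of the compact range of $r$ or $x$ by the trivial comparison.
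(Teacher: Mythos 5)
Your proposal is correct, and its global skeleton coincides with the paper's: both run the cycle (i)$\Rightarrow$(ii)$\Rightarrow$(iii)$\Rightarrow$(iv)$\Rightarrow$(v)$\Rightarrow$(i), attach (vi) through (iv), and prove (i)$\Rightarrow$(ii) by the same dyadic iteration yielding the exponent $\log_2 C$. Three local arguments genuinely differ, though. First, for (ii)$\Rightarrow$(iii) the paper avoids integration by parts: it bounds $\bigl(\frac{1-t}{1-s}\bigr)^\g\le C^{\g/\b}\bigl(\widehat{\om}(t)/\widehat{\om}(s)\bigr)^{\g/\b}$ and integrates $\om\,\widehat{\om}^{-\g/\b}$ exactly, with clean constant $\frac{\b}{\g-\b}C^{\g/\b}$; your route via parts plus the pointwise bound $\widehat{\om}(s)\le C\bigl(\frac{1-s}{1-t}\bigr)^\b\widehat{\om}(t)$ works equally well (the boundary term $(1-t)^\g\widehat{\om}(0)$ is absorbed by (ii) at $r=0$), and the identity you use is precisely the one the paper deploys for the \emph{reverse} implication (iii)$\Rightarrow$(ii) --- an implication your single-cycle architecture renders redundant and rightly omits. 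Second, your proof of (iv)$\Rightarrow$(v) by restricting the integral to $[r,t]$, which gives $r^{1/(1-t)}\bigl(\widehat{\om}(r)-\widehat{\om}(t)\bigr)\le C\widehat{\om}(t)$, is strictly simpler than the paper's second integration by parts and reaches the same conclusion. Third, where the paper dismisses (iv)$\Leftrightarrow$(vi) as ``clear'', you supply the content: the split of $\om_x$ at $t=1-\frac1x$, the lower bound via $(1-\frac1x)^x\ge\frac14$ for $x\ge2$, the compact range $1\le x\le2$, and a direct return (vi)$\Rightarrow$(i) through the moment doubling $\om_x\lesssim\om_{2x}$ rather than through (iv); this costs nothing and has the merit of making explicit the boundedness checks (of $r^{-2/(1-r)}$ on $[\frac12,1)$, and the separate small-$r$ range in (v)$\Rightarrow$(i)) that the paper leaves tacit. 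One cosmetic slip: in (ii)$\Rightarrow$(iii), $\int_0^t(1-s)^{\b-\g-1}\,ds$ equals $\frac{(1-t)^{\b-\g}-1}{\g-\b}$, so it is only $\le\frac{(1-t)^{\b-\g}}{\g-\b}$ and not $\asymp$ it as $t\to0$; since only the upper bound is used, nothing breaks.
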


\begin{proof}
We are going to prove (i)$\Leftrightarrow$(ii)$\Leftrightarrow$(iii)$\Rightarrow$(iv)$\Rightarrow$(v)$\Rightarrow$(i) and (iv)$\Leftrightarrow$(vi).
\par Let $\om\in\DD$. If $0\le r\le t<1$ and $r_n=1-2^{-n}$ for all $n\in\N\cup\{0\}$, then there exist $k$ and $m$ such that $r_k\le r<r_{k+1}$ and $r_m\le t<r_{m+1}$. Hence
    \begin{equation*}
    \begin{split}
    \widehat{\om}(r)&\le\widehat{\om}(r_k)
    \le C\widehat{\om}(r_{k+1})
    \le\cdots
    \le C^{m-k+1}\widehat{\om}(r_{m+1})
    \le C^{m-k+1}\widehat{\om}(t)\\
    &=C^22^{(m-k-1)\log_2 C}\widehat{\om}(t)\le C^2\left(\frac{1-r}{1-t}\right)^{\log_2 C}\widehat{\om}(t),\quad 0\le r\le t<1.
    \end{split}
    \end{equation*}
On the other hand, it is clear that (ii) implies that $\om\in\DD$. So, we have proved  (i)$\Leftrightarrow$(ii).

If (ii) is satisfied and $\gamma>\b$, then
    \begin{equation*}
    \begin{split}
    \int_0^t\left(\frac{1-t}{1-s}\right)^\g\om(s)\,ds
    &\le C^\frac{\g}{\b}\int_0^t\left(\frac{\widehat{\om}(t)}{\widehat{\om}(s)}\right)^\frac{\g}{\b}\om(s)\,ds
    =C^\frac{\g}{\b}\widehat{\om}(t)^\frac{\g}{\b}\int_0^t\frac{\om(s)}{\left(\widehat{\om}(s)\right)^\frac{\g}{\b}}\,ds\\
    &\le\frac{\b}{\g-\b}C^\frac{\g}{\b}\widehat{\om}(t),\quad 0\le t<1.
    \end{split}
    \end{equation*}
Conversely, if (iii) is satisfied, then an integration by parts yields
    \begin{equation*}
    \begin{split}
    C\widehat{\om}(t)&\ge\int_0^t\left(\frac{1-t}{1-s}\right)^\g\om(s)\,ds\\
    &=-\widehat{\om}(t)+(1-t)^\g\widehat{\om}(0)+\gamma(1-t)^\gamma\int_0^t\frac{\widehat{\om}(s)}{(1-s)^{\gamma+1}}\,ds\\
    &\ge-\widehat{\om}(t)+(1-t)^\g\widehat{\om}(0)+\gamma(1-t)^\gamma\widehat{\om}(r)\int_0^r\frac{ds}{(1-s)^{\gamma+1}}\\
    &=-\widehat{\om}(t)+(1-t)^\g(\widehat{\om}(0)-\widehat{\om}(r))+\left(\frac{1-t}{1-r}\right)^\gamma\widehat{\om}(r)\\
    &\ge\left(\frac{1-t}{1-r}\right)^\gamma\widehat{\om}(r)-\widehat{\om}(t),\quad 0\le r\le t<1,
    \end{split}
    \end{equation*}
    therefore (ii)$\Leftrightarrow$(iii).

The proof of \cite[Lemma~1.3]{PelRat} shows that (iii) implies (iv), we include a proof  for the sake of completeness.
 A simple calculation shows that for all $s\in (0,1)$ and $x>1$,
    $$
    s^{x-1}(1-s)^\gamma\le\left(\frac{x-1}{x-1+\gamma}\right)^{x-1}\left(\frac{\gamma}{x-1+\gamma}\right)^\gamma
    \le\left(\frac{\gamma}{x-1+\gamma}\right)^\gamma.
    $$
 Therefore (iii), with
$t=1-\frac{1}{x}$, yields
   \begin{equation*}
    \begin{split}
    \int_0^{1-\frac{1}{x}} s^{x}\om(s)\,ds
    &\le\left(\frac{\gamma x}{x-1+\gamma}\right)^\gamma\int_0^{1-\frac{1}{x}}\frac{\om(s)}{x^\gamma(1-s)^\gamma}s\,ds
    \\ & \lesssim
    \int_{1-\frac{1}{x}}^1\om(s)\,ds,\quad x>1,
     \end{split}
    \end{equation*}
 which is equivalent to (iv).
\par On the other hand, if (iv) is satisfied and $0\le r\le t<1$, then an integration by parts yields
    \begin{equation*}
    \begin{split}
    C\widehat{\om}(t)&\ge\int_0^ts^\frac{1}{1-t}\om(s)\,ds
    =-\widehat{\om}(t)t^\frac{1}{1-t}+\frac{1}{1-t}\int_0^t\widehat{\om}(s)s^\frac{t}{1-t}\,ds\\
    &\ge-\widehat{\om}(t)t^\frac{1}{1-t}+\frac{1}{1-t}\int_0^r\widehat{\om}(s)s^\frac{t}{1-t}\,ds\\
    &\ge-\widehat{\om}(t)t^\frac{1}{1-t}+\frac{\widehat{\om}(r)}{1-t}\int_0^rs^\frac{t}{1-t}\,ds
    =-\widehat{\om}(t)t^\frac{1}{1-t}+r^\frac{t}{1-t}\widehat{\om}(r),
    \end{split}
    \end{equation*}
and thus
    $$
    r^\frac{1}{1-t}\widehat{\om}(r)\le\left(C+t^\frac1{1-t}\right)\widehat{\om}(t),\quad 0\le r\le t<1.
    $$
This implies (v), and by choosing $t=\frac{1+r}{2}$ in (v), we deduce $\om\in\DD$ .
 Finally, it is clear that (iv) is equivalent to (vi).
\end{proof}
\section{Equivalent norms}
In this section we shall present several equivalent norms on weighted Bergman spaces. In particular
we shall give a detailed proof of a decomposition norm theorem for $A^p_\om$ when  $\om\in\DD$ and $1<p<\infty$.
\par It is well-known that a choice of an appropriate norm is often a key step when solving
 a problem on a space of analytic functions.
For instance, in the study of the integration operators
\begin{displaymath}
    T_g(f)(z)=\int_{0}^{z}f(\zeta)\,g'(\zeta)\,d\zeta,\quad
    z\in\D,\quad g\in\H(\D),
    \end{displaymath}
one wants to get rid of the integral symbol, so one looks  for   norms
in terms of the first derivative.
It is worth mentioning that the operator $T_g$  began to be extensively
studied after the appearance of the works by Aleman, Cima and
Siskakis~\cite{AC,AS}. A description of its resolvent set on Hardy and standard Bergman spaces  is
strongly connected with the classical theory of the Muckenhoupt weights and the Bekoll\'e-Bonami weights
\cite{AlCo,AlPe}.
\subsection{Norms in terms of the derivative}
Following Siskakis \cite{Si},
for a radial weight $\om$ we define its distortion function by

    $$
    \psi_{\om}(z)=\frac{1}{\om(|z|)}\int_{|z|}^1\om(s)\,ds,\quad
    z\in\D.
    $$
For a large class of radial weights, which includes any
differentiable decreasing weight and all the standard ones, the
most appropriate way to obtain a useful norm involving the first
derivative is to establish a kind of
Littlewood-Paley type formula
\cite[Theorem $1.1$]{PavP}.
\begin{theorem}\label{PavP}
Suppose that  $\om$ is a radial differentiable weight, and there is $L>0$
such that
\begin{equation*}\label{eq:L}
\sup_{0<r<1}\frac{\om'(r)}{\om(r)^2}\int_r^1 \om(x)\,dx\ \le L.
\end{equation*}
 Then, for each $p\in (0,\infty)$
\begin{displaymath}
\int_{\D}| f(z)|^p \om(z)\,dA(z)\asymp
|f(0)|^p+\int_{\D}|f'(z)|^p\,\psi^p_ \om(z)\,\om(z)\,dA(z),\quad f\in \H(\D).
\end{displaymath}
\end{theorem}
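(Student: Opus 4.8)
The plan is to reduce the two-sided estimate to a pair of one-dimensional weighted inequalities in the radial variable, and to exploit the fact that the distortion $\psi_\om$ is exactly the natural length scale attached to $\om$ under the hypothesis. Writing everything in polar coordinates, it suffices to compare $\int_0^1 M_p^p(r,f)\om(r)\,r\,dr$ with $|f(0)|^p+\int_0^1 M_p^p(r,f')\psi_\om^p(r)\om(r)\,r\,dr$. The bound $|f(0)|^p\lesssim\int_\D|f|^p\om\,dA$ is immediate from the subharmonicity of $|f|^p$, since $|f(0)|^p\le M_p^p(r,f)$ for every $r$ and integration against $\om$ then gives the claim. So the two genuine tasks are the forward inequality $\int_\D|f|^p\om\,dA\lesssim|f(0)|^p+\int_\D|f'|^p\psi_\om^p\om\,dA$ and its reverse. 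A preliminary observation I would record first is that the hypothesis $\sup_r \om'\widehat{\om}/\om^2\le L$ is equivalent to a one-sided Lipschitz bound on the distortion, because $\psi_\om'=-1-\om'\widehat{\om}/\om^2$; in particular $\psi_\om(r)\lesssim 1-r$, and, more importantly, $\om$ varies by at most a bounded factor over any interval of length $\asymp\psi_\om(r)$ centred at $r$.

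For the reverse inequality I would run a Cauchy/subharmonicity estimate for $f'$ \emph{at the scale $\psi_\om$}, not at the scale $1-r$. For $|w|=t$ and $\rho=c\,\psi_\om(t)$, which remains admissible since $\psi_\om(t)\lesssim 1-t$, the Cauchy estimate gives $|f'(w)|\lesssim\rho^{-1}\sup_{D(w,\rho)}|f|$, and after integrating in the angular variable and invoking subharmonicity one obtains $M_p^p(t,f')\lesssim\psi_\om(t)^{-p}\,\widetilde{M}_p^p(t,f)$, where $\widetilde{M}_p(t,f)$ is an average of $M_p(\cdot,f)$ over $|s-t|\lesssim\psi_\om(t)$. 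Multiplying by $\psi_\om^p(t)\om(t)$, the factors $\psi_\om^p$ cancel, and the crucial point is that $\om(s)\asymp\om(t)$ whenever $|s-t|\lesssim\psi_\om(t)$, by the $L$-condition; hence $\int_0^1 M_p^p(t,f')\psi_\om^p(t)\om(t)\,dt\lesssim\int_0^1 M_p^p(s,f)\om(s)\,ds$. This is precisely where the choice of $\psi_\om$ as the length scale is indispensable: at the scale $1-r$ the weight $\om$ need not be comparable across the disc (for instance for rapidly decreasing weights), and the estimate would break down.

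For the forward inequality I would integrate along radii, $f(re^{i\theta})=f(0)+\int_0^r f'(se^{i\theta})e^{i\theta}\,ds$, so that Minkowski's integral inequality (valid for $p\ge1$) yields $M_p(r,f)\le|f(0)|+\int_0^r M_p(s,f')\,ds$. Raising to the $p$-th power and integrating against $\om$, the matter reduces to the weighted Hardy inequality $\int_0^1\big(\int_0^r g(s)\,ds\big)^p\om(r)\,dr\lesssim\int_0^1 g(s)^p\psi_\om^p(s)\om(s)\,ds$ applied to $g=M_p(\cdot,f')$. I would verify this via Muckenhoupt's criterion, namely $\sup_{0<s<1}\big(\int_s^1\om\big)^{1/p}\big(\int_0^s(\psi_\om^p\om)^{-p'/p}\big)^{1/p'}<\infty$; here the identity $\psi_\om=\widehat{\om}/\om$ together with the $L$-condition produces an exact cancellation that forces the supremum to be finite, the two factors scaling inversely to one another.

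The main obstacle is twofold. First, the verification of Muckenhoupt's condition with the sharp weight $\psi_\om^p\om$: this is where essentially all of the hypothesis is consumed, and one must show that the $L$-condition is exactly strong enough to balance $\int_s^1\om$ against $\int_0^s(\psi_\om^p\om)^{-p'/p}$. Second, the range $0<p<1$, where Minkowski's inequality is no longer available; there I would replace the ray-integration argument by one based on the monotonicity of $r\mapsto M_p(r,f)$ combined with a maximal-function or Hardy--Littlewood--Flett type substitute, treating the derivative term by the same $\psi_\om$-scale Cauchy estimate as above. Once these two points are settled, combining the forward and reverse inequalities with the trivial bound for $|f(0)|^p$ yields the asserted equivalence for every $0<p<\infty$.
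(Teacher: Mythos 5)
The survey states this result without proof, citing \cite[Theorem~1.1]{PavP}, so your argument must stand on its own merits. Much of it does: the identity $\psi_\om'=-1-\om'\widehat{\om}/\om^2$ correctly converts the hypothesis into the one-sided Lipschitz bound $\psi_\om'\ge-(1+L)$, and since integrability of $\om$ forces $\psi_\om(1^-)=0$, this yields $\psi_\om(r)\le(1+L)(1-r)$ as you claim. Your Muckenhoupt verification for the forward inequality is not merely plausible but an exact computation: since $1-p'=-p'/p$ and $(p-1)p'/p=1$, one has $(\psi_\om^p\om)^{-p'/p}=\om\,\widehat{\om}^{-p'}$, whence $\int_0^s\om(u)\widehat{\om}(u)^{-p'}du\le\frac{1}{p'-1}\widehat{\om}(s)^{1-p'}$ and the two factors in the Muckenhoupt quotient cancel identically. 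But note what this shows: the $L$-condition is \emph{not} consumed there at all -- the forward inequality holds for \emph{every} radial weight when $1\le p<\infty$, so your diagnosis of where the hypothesis is spent is off. It is genuinely needed in the forward direction only for $0<p<1$, via a Flett-type inequality $M_p^p(r,f)\lesssim|f(0)|^p+\int_0^r M_p^p(s,f')(1-s)^{p-1}\,ds$ together with $(1-s)^{p-1}\widehat{\om}(s)\lesssim\psi_\om(s)^p\om(s)$, which is precisely $\psi_\om(s)\lesssim 1-s$; you name the right tool (``Hardy--Littlewood--Flett'') but leave this case, and hence the whole $p<1$ forward estimate, unexecuted.

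The genuine gap is in the reverse inequality, where you assert that ``$\om$ varies by at most a bounded factor over any interval of length $\asymp\psi_\om(r)$ centred at $r$, by the $L$-condition.'' This is false. The hypothesis is one-sided: it bounds $(\log\om)'$ above by $L/\psi_\om$ but in no way below, and indeed every differentiable \emph{decreasing} weight satisfies it trivially (the supremum is $\le0$). A smooth decreasing staircase -- constant $\approx a_n$ on long blocks, dropping to $a_{n+1}=10^{-n}a_n$ over widths $\delta_n$ chosen much smaller than the value of $\psi_\om$ at the left edge of the drop -- satisfies the hypothesis, yet collapses by the factor $10^{-n}$ across an interval of length $c\,\psi_\om(t)$. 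For such a weight your Fubini step breaks on the outward half of the averaging window: there you need $\om(t)\lesssim\om(s)$ for $s>t$, which is exactly what fails. What the hypothesis actually gives is one-sided: for $c\le\frac{1}{2(1+L)}$ the Lipschitz bound forces $\psi_\om(u)\ge\psi_\om(t)/2$ on $|u-t|\le c\,\psi_\om(t)$, hence $(\log\om)'\le 2L/\psi_\om(t)$ there, so $\om$ cannot \emph{increase} by more than a fixed factor across the window, while it may decrease without bound. The repair is to run the Cauchy estimate on a disc shifted \emph{inward} -- centre of modulus $t-2\rho$, radius $\rho=c\,\psi_\om(t)$, admissible since $\psi_\om(t)\lesssim 1-t$ -- so that only radii $s\le t$ enter; on that range $\om(t)\le e^{2Lc}\om(s)$ and $\psi_\om(s)\asymp\psi_\om(t)$, and the Fubini step closes (with the standard subharmonic area-average lemma replacing Minkowski when $p<1$). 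As written, however, the reverse direction -- one of the only two places where the hypothesis is consumed -- rests on a false comparability lemma, so the proof has a real, though repairable, gap.
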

If $\om\in\I$ and $p\neq 2$, a result analogous to Theorem~\ref{PavP} cannot be obtained  in
general \cite[Proposition $4.2$]{PelRat}.
\begin{proposition}\label{pr:NOL-P}
Let $p\ne2$. Then there exists $\om\in\I$ such that, for any
function $\vp:[0,1)\to(0,\infty)$, the relation
\begin{equation*}\label{eq:NOL-P}
    \|f\|^p_{A^p_{\om}}\asymp
    \int_\D|f'(z)|^p\varphi(|z|)^p\om(z)\,dA(z)+|f(0)|^p
    \end{equation*}
can not be valid for all $f\in\H(\D)$.
\end{proposition}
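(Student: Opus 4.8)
The plan is to argue by contradiction: assuming some $\varphi:[0,1)\to(0,\infty)$ realizes the equivalence for every $f\in\H(\D)$, I will extract two \emph{incompatible} necessary conditions on $\varphi$ by inserting two different families of test functions, and then exhibit $\om\in\I$ for which no positive $\varphi$ can meet both. Two sharp computations drive everything. First, for a single monomial $f=z^n$ both sides are explicit radial integrals and, by Lemma~\ref{Lemma:replacement-Lemmas-Memoirs}(vi), $\|z^n\|_{A^p_\om}^p\asymp\widehat{\om}(1-\tfrac1n)$; since $(z^n)'=nz^{n-1}$, the hypothetical equivalence forces
\[
\int_0^1 r^{(n-1)p+1}\varphi(r)^p\om(r)\,dr\asymp n^{-p}\,\widehat{\om}\Big(1-\tfrac1n\Big),\qquad n\in\N,
\]
a condition I shall call $(\star)$. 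This controls only the \emph{moments} of the weight $u=\varphi^p\om$, i.e.\ a smoothed version of $\varphi$: moments see just the tail $\widehat{\om}$, which is comparable for $\om$ and for any fine-scale reshuffling of it. Hence $(\star)$ can always be met — one may take $u\in\DD$ with $\widehat{u}(r)\asymp(1-r)^p\widehat{\om}(r)$ — so monomials alone cannot finish the proof, and a family sensitive to the \emph{pointwise} behaviour of $\om$ is needed.

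The finer tool is the square-function description of the $A^p_\om$-norm of a Hadamard-lacunary series, valid for \emph{any} radial weight. If $f=\sum_k a_kz^{n_k}$ with $n_{k+1}/n_k\ge\lambda>1$, then Khinchin's inequality applied at each fixed radius (it needs only that the weight be radial and nonnegative, so it applies equally to $\om$ and to the possibly non-doubling weight $\varphi^p\om$) gives
\[
\|f\|_{A^p_\om}^p\asymp\int_0^1\Big(\sum_k|a_k|^2r^{2n_k}\Big)^{p/2}\om(r)\,2r\,dr,
\]
\[
\int_\D|f'|^p\varphi^p\om\,dA\asymp\int_0^1\Big(\sum_k|a_k|^2n_k^2r^{2n_k-2}\Big)^{p/2}\varphi(r)^p\om(r)\,2r\,dr.
\]
The mechanism is that differentiation inserts the factor $n_k^2$, pushing the square function on the right toward the \emph{top} active frequency $n_N$ and localising that integral to the single lacunary annulus $1-r\in(n_N^{-1},n_{N-1}^{-1})$, which subtends a bounded interval of length $\asymp\log\lambda$ in the variable $\log\frac1{1-r}$. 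With spread coefficients, the left-hand square function instead averages over the whole tail and reads only $\widehat{\om}$. So the two sides probe $\varphi^p\om$ at very different scales.

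The weight is chosen to exploit this. Take $\om\in\I$ with strong oscillation, e.g.\ $\om(r)=|\sin(\log\frac1{1-r})|v_\alpha(r)+1$ as in the excerpt, which lies in $\I$ but violates \eqref{eq:r2}. Tuning $n_N$ places the top block at a peak (where $\om\asymp v_\alpha$) or at a trough (where the sine factor is small) of the oscillation; running $N\to\infty$ through peaks and troughs, the equivalence forces the block-average of $\varphi^p\om$ to track $\om$ there, swinging by the unbounded factor $\asymp v_\alpha$, while $(\star)$ permits $\varphi^p\om$ only the smooth mean behaviour $\widehat{u}(r)\asymp(1-r)^p\widehat{\om}(r)$, which oscillates by at most a bounded factor. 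These two demands cannot be reconciled, and the resulting contradiction proves the proposition.

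The main obstacle is the calibration in the second step. Because $\varphi$ is the very object to be ruled out, the weight $\varphi^p\om$ on the derivative side is not known to lie in $\DD$, so no decomposition or Paley-type theorem may be quoted for it; one must run Khinchin's inequality by hand (using only radiality) and then estimate a square-function integral against a potentially non-doubling weight. Moreover the Hadamard gap ratio is bounded away from $1$, so a single block subtends a fixed-length window in $\log\frac1{1-r}$ that need not sit inside one oscillation period of $\om$. Consequently the top block reads not the pointwise value but a window-average of $\om$, and isolating a genuine peak-versus-trough contrast requires a careful averaging over blocks together with a quantitative comparison against $(\star)$. Making the localisation sharp enough to detect the oscillation while respecting the lacunarity constraint and the mean condition $(\star)$ is the technical heart of the argument.
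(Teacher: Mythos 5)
There is a genuine gap here, and it is structural rather than technical: your argument never uses the hypothesis $p\ne2$, while every tool you invoke --- the Khinchin--Zygmund comparability $M_p(r,f)\asymp M_2(r,f)$ for Hadamard-lacunary series at each fixed radius, radiality of $\varphi^p\om$, the monomial moment condition $(\star)$, and window averages of the oscillating weight --- applies verbatim at $p=2$. But at $p=2$ the relation you are trying to refute is \emph{true for every radial weight}, in particular for your chosen $\om(r)=\left|\sin\left(\log\frac{1}{1-r}\right)\right|v_\alpha(r)+1$: by Theorem~\ref{ThmLittlewood-Paley}, $\|f\|_{A^2_\om}^2=4\|f'\|_{A^2_{\om^\star}}^2+\om(\D)|f(0)|^2$, so $\varphi=(\om^\star/\om)^{1/2}$ (modified on $[0,\frac12]$ to keep it finite; this is harmless because $\om\ge1$ and $|f'|$ on compact sets is dominated by $\|f\|_{A^2_\om}$) realizes the equivalence. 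Hence, if your peak-versus-trough mechanism yielded a contradiction, it would yield one at $p=2$ as well, which is impossible; the scheme as outlined cannot close. Note also that the survey contains no proof of the proposition --- it cites \cite[Proposition~4.2]{PelRat} --- and any correct argument must make the exponent enter essentially, for instance through test functions whose $M_p$- and $M_2$-means genuinely differ; lacunary series can never do this, precisely because of the Zygmund comparability on which your computation rests.

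Concretely, two of your steps fail. First, the derivative-side integral does not localize at the top frequency when the coefficients are spread: at $p=2$ it is exactly comparable to $\sum_k|a_k|^2n_k^2\int_0^1 r^{2n_k-1}u(r)\,dr$ with $u=\varphi^p\om$, and every active block contributes, so nothing singles out $n_N$. Second, and fatally for the contrast you want: with gap ratio $\lambda>1$ fixed, the top lacunary annulus is a window of fixed length $\asymp\log\lambda$ in the variable $t=\log\frac{1}{1-r}$, whereas $|\sin t|$ oscillates with fixed period; the mean of $|\sin t|$ over \emph{any} interval of length $\delta>0$ is bounded below by a constant $c(\delta)>0$. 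Consequently the window average of your $\om$ is $\asymp v_\alpha$ at troughs just as at peaks: the advertised unbounded swing ``by the factor $\asymp v_\alpha$'' is in reality a bounded one, fully consistent with $(\star)$; and shrinking the window to detect a trough forces $\lambda\to1$, which blows up the Khinchin--Zygmund constants. You flag exactly this as ``the technical heart'' of your argument, but it is not a surmountable technicality --- by the $p=2$ obstruction above, no amount of averaging over blocks can rescue the mechanism. Finally, $(\star)$ is stronger than you note: since $\om\in\DD$, it self-improves to $u_x\asymp u_{2x}$ for all $x\ge1$, and a short iteration then gives $u\in\DD$ with $\widehat{u}(r)\asymp(1-r)^p\widehat{\om}(r)$ by Lemma~\ref{Lemma:replacement-Lemmas-Memoirs}(vi); so the only information your test families can read --- tails and fixed-length window averages of $u$ and $\om$ --- is already matched, and the oscillation of $\om$ is invisible to them.
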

\par As for a Littlewood-Paley formula for $A^p_\om$,
the following result  was proved in \cite[Theorem $3.1$]{AlCo}.
\begin{theorem}\label{PavP}
Suppose that $\om$ is a  weight such that $\frac{\om(z)}{(1-|z|)^\eta}$ satisfies the Bekoll\'e-Bonami condition $B_{p_0}(\eta)$
for some $p_0>0$ and some $\eta>-1$.
 Then, for each $p\in (0,\infty)$
\begin{equation}\label{LPbek}
\int_{\D}| f(z)|^p \om(z)\,dA(z)\asymp
|f(0)|^p+\int_{\D}|f'(z)|^p\,(1-|z|)^p\,\om(z)\,dA(z),\quad f\in \H(\D).
\end{equation}
\end{theorem}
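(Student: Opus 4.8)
The plan is to split \eqref{LPbek} into the two inequalities it encodes, writing $J(f)=|f(0)|^p+\int_\D|f'(z)|^p(1-|z|)^p\om(z)\,dA(z)$ and $I(f)=\int_\D|f(z)|^p\om(z)\,dA(z)$, and in both cases to reduce the analytic content to two consequences of the hypothesis $v=\om/(1-|z|)^\eta\in B_{p_0}(\eta)$. The first is that $\om$ is doubling on Carleson squares, so that $\om(S(I))$ is comparable to the mass of the top Whitney box of $S(I)$; the second, and deeper, is that an $\om$-average over a hyperbolic disc may be exchanged with the pointwise weight when it is tested against a subharmonic function, i.e. the relevant Bergman averaging operator is controlled on $L^p_\om$. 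I want to stress at the outset that both are regularity statements about $\om$ that do not refer to the exponent $p$ in the conclusion; this is exactly what should allow a single hypothesis $B_{p_0}(\eta)$ to yield \eqref{LPbek} for \emph{every} $p\in(0,\infty)$, with the Bekoll\'e--Bonami condition \eqref{eq:BB} entering only as a $p$-independent input.

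For the upper estimate $J(f)\lesssim|f(0)|^p+I(f)$ I would start from the elementary pointwise bound furnished by the Cauchy formula together with subharmonicity of $|f|^p$, namely $(1-|z|)^p|f'(z)|^p\lesssim\sup_{D(z,\frac12(1-|z|))}|f|^p\lesssim\frac{1}{(1-|z|)^2}\int_{D(z,\frac34(1-|z|))}|f(\z)|^p\,dA(\z)$, valid for every $p>0$. Multiplying by $\om(z)$, integrating over $\D$ and applying Fubini transfers the averaging onto the weight, producing a kernel that is an $\om$-average over a hyperbolic disc about $\z$ divided by $(1-|\z|)^2$; doubling makes this comparable to $\om(S(I_\z))/(1-|\z|)^2$, and the averaging bound then lets me absorb it against $|f(\z)|^p$ to recover $I(f)$. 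The constant $|f(0)|^p$ in $J(f)$ is simply carried along, as it disappears from $f'$.

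The reverse estimate $I(f)\lesssim J(f)$ is the substantial one. After subtracting the constant, so that a term $|f(0)|^p\om(\D)\le C|f(0)|^p$ splits off, I would reconstruct $f$ from $f'$ on the Bergman tree: fixing a Whitney decomposition $\{R\}$ of $\D$ with centres $z_R$ and writing $R\preceq R'$ when $R$ lies in the Carleson shadow of $R'$, telescoping $f$ along the geodesic from the root box to the box containing $z$ gives $\sup_R|f|\lesssim|f(0)|+\sum_{R'\succeq R}b_{R'}$ with $b_{R'}=(1-|z_{R'}|)\sup_{R'}|f'|$. Hence $I(f)\lesssim|f(0)|^p+\sum_R\om(R)\bigl(\sum_{R'\succeq R}b_{R'}\bigr)^p$, and the matter reduces to the discrete Hardy inequality $\sum_R\om(R)\bigl(\sum_{R'\succeq R}b_{R'}\bigr)^p\lesssim\sum_{R'}\om(S(R'))\,b_{R'}^p$ on the tree, whose validity is governed by the Carleson condition $\om(S(R'))\lesssim\om(R')$, which is precisely the doubling of $\om$. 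For $0<p\le1$ this is immediate from $\bigl(\sum b\bigr)^p\le\sum b^p$ combined with $\sum_{R\preceq R'}\om(R)=\om(S(R'))\lesssim\om(R')$, and for $p>1$ it is the standard tree Hardy inequality under the same condition. A last use of subharmonicity of $|f'|^p$ and of the averaging bound converts each $\om(R')(1-|z_{R'}|)^p\sup_{R'}|f'|^p$ back into $\int_{R'}|f'(z)|^p(1-|z|)^p\om(z)\,dA(z)$, which sum (with bounded overlap) to $J(f)$.

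The main obstacle is exactly the exchange of averages with the pointwise weight on which both directions rest. Doubling alone drives the combinatorial Hardy step, but passing between $\int_\D(\text{disc-average of }|f|^p)\,\om\,dA$ and $\int_\D|f|^p\om\,dA$ is the $A_\infty$/reverse-H\"older content of the Bekoll\'e--Bonami class, and it is here that $v\in B_{p_0}(\eta)$ does its real work. Rather than re-derive this, I would invoke the Bekoll\'e--Bonami theory recalled around \eqref{eq:BB}, using the self-improvement of the class to make the averaging estimate available at the single fixed $p$ under consideration.
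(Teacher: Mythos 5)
The survey itself contains no proof of this statement: it is quoted from Aleman--Constantin \cite[Theorem 3.1]{AlCo}, so your attempt can only be measured against that source. Your architecture is essentially the right one, and it is close in spirit to the known proof: both directions are made to rest on (a) comparability of $\om(S(I))$ with the $\om$-mass of the top Whitney box of $S(I)$, and (b) an averaging (sub-mean-value) estimate that exchanges hyperbolic disc averages with the weight. Pillar (a) is indeed a consequence of the hypothesis, via the standard $A_p$-type domination $A_\eta(E)/A_\eta(S)\lesssim\bigl(\nu(E)/\nu(S)\bigr)^{1/p_0}$ for $E\subset S(I)$, where $\nu=v\,dA_\eta$; note that this also yields the reverse-doubling $\om(S')\le(1-\e)\,\om(S)$ for every child square $S'$, and it is this geometric decay along branches --- not the bare identity $\sum_{R\preceq R'}\om(R)=\om(S(R'))$ --- that makes your tree Hardy inequality for $p>1$ close (insert small powers $\om(S(R'))^{s}$ in the H\"older step and sum geometrically); calling it ``standard under the Carleson condition'' glosses this, but your stated condition $\om(S(R'))\lesssim\om(R')$ does suffice.

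The genuine gap is in how you secure pillar (b), which by your own account is where the hypothesis ``does its real work''. You invoke ``the $A_\infty$/reverse-H\"older content of the Bekoll\'e--Bonami class'' and ``the self-improvement of the class''. Bekoll\'e--Bonami weights possess neither property: $B_p(\eta)$ weights satisfy no reverse H\"older inequality in general, and the class is not open-ended ($v\in B_{p_0}(\eta)$ does not give $v\in B_{p_0-\e}(\eta)$); only the trivial upward inclusion $B_{p_0}(\eta)\subset B_q(\eta)$ for $q\ge p_0$ holds, which is useless precisely in the range $0<p<p_0$ --- the range where the theorem has its strength. Moreover, nothing ``recalled around \eqref{eq:BB}'' in this survey supplies the estimate; \eqref{eq:BB} is only the definition. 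The gap is fillable, and the repair makes your $p$-independence heuristic precise without any self-improvement: for $f\in\H(\D)$ and any $p>0$ the function $|f|^{p/p_0}$ is subharmonic, so the area sub-mean-value inequality on a hyperbolic disc $\De(z)$ of radius $c(1-|z|)$, followed by H\"older with the \emph{fixed} exponents $(p_0,p_0')$ after writing $|f|^{p/p_0}=|f|^{p/p_0}\om^{1/p_0}\om^{-1/p_0}$, gives
\begin{equation*}
|f(z)|^p\lesssim\frac{1}{(1-|z|)^{2p_0}}\left(\int_{\De(z)}|f|^p\om\,dA\right)
\left(\int_{\De(z)}\om^{-\frac{p_0'}{p_0}}\,dA\right)^{\frac{p_0}{p_0'}}
\lesssim\frac{1}{\om(S(I_z))}\int_{\De(z)}|f|^p\,\om\,dA,
\end{equation*}
where the second step applies the $B_{p_0}(\eta)$ condition to a Carleson square containing $\De(z)$ (on which $(1-|\cdot|)^\eta$ is constant up to multiplicative constants) and the powers of $(1-|z|)$ cancel exactly. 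This sub-mean-value property holds for every $p\in(0,\infty)$ with the single exponent $p_0$ doing the work, and it is, in essence, how the averaging input is obtained in \cite{AlCo}. Once (b) is stated and proved in this form, your Fubini absorption in the upper estimate, the conversion $\om(S(R'))(1-|z_{R'}|)^p\sup_{R'}|f'|^p\lesssim\int|f'(z)|^p(1-|z|)^p\om(z)\,dA(z)$ on enlarged boxes with bounded overlap, and hence the whole scheme, go through.
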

We remark that whenever $\om\in C^1(\D)$  and
$$(1-|z|)|\nabla \om(z)|\lesssim \om(z),\quad z\in\D,$$ \eqref{LPbek} is equivalent to a Bekoll\'e-Bonami condition \cite[Theorem $3.1$]{AlCo}.

\medskip\par Now, we consider the non-tangential approach regions
    \begin{equation*}\label{eq:gammadeuintro}
    \Gamma(\z)=\left\{z\in \D:\,|\t-\arg
    z|<\frac12\left(1-\frac{|z|}{r}\right)\right\},\quad
    \z=re^{i\theta}\in\D\setminus\{0\}
    \end{equation*}
and the related tents $T(z)=\left\{\z\in \D:\,z\in\Gamma(\z)\right\}$.
\par Whenever $\om$ is a radial weight,  $A^p_\om$  can be equipped with other norms which are inherited
from the classical Fefferman-Stein
estimate~\cite{FC} and the Hardy-Stein-Spencer
identity~\cite{Garnett1981} for the
 $H^p$-norm. Here
$
    \omega^\star(z)=\int_{|z|}^1\omega(s)\log\frac{s}{|z|}s\,ds,
    $\, $z\in\D\setminus\{0\}$.

\begin{theorem}\label{ThmLittlewood-Paley}
Let $0<p<\infty$, $n\in\N$ and $f\in\H(\D)$, and let $\omega$ be a
radial weight. Then
    \begin{equation*}\label{HSB}
    \|f\|_{A^p_\omega}^p=p^2\int_{\D}|f(z)|^{p-2}|f'(z)|^2\omega^\star(z)\,dA(z)+\omega(\D)|f(0)|^p,
    \end{equation*}
    and
\begin{equation*}\label{normacono}
    \begin{split}
    \|f\|_{A^p_\omega}^p&\asymp\int_\D\,\left(\int_{\Gamma(u)}|f^{(n)}(z)|^2
    \left(1-\left|\frac{z}{u}\right|\right)^{2n-2}\,dA(z)\right)^{\frac{p}2}\omega(u)\,dA(u)\\
    &\quad+\sum_{j=0}^{n-1}|f^{(j)}(0)|^p,
    \end{split}
    \end{equation*}
where the constants of comparison depend only on $p$, $n$ and $\om$. In particular,
    \begin{equation*}\label{eq:LP2}
    \|f\|_{A^2_\omega}^2=4\|f'\|_{A^2_{\omega^\star}}^2+\omega(\D)|f(0)|^2.
    \end{equation*}
\end{theorem}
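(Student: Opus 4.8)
The plan is to treat the three displayed assertions separately. For the first identity I would pass to polar coordinates and write $\|f\|_{A^p_\om}^p=\int_0^1 M_p^p(r,f)\,2r\om(r)\,dr$. The crucial input is the pointwise identity $\Delta|f|^p=p^2|f|^{p-2}|f'|^2$, valid for analytic $f$, which combined with the Green/Riesz representation on $D(0,r)$ gives the Hardy--Stein--Spencer identity \cite{Garnett1981},
\[
M_p^p(r,f)-|f(0)|^p=\frac{p^2}{2}\int_{D(0,r)}|f(\z)|^{p-2}|f'(\z)|^2\log\frac{r}{|\z|}\,dA(\z).
\]
Substituting this into the radial integral and applying Fubini's theorem, the inner radial integral $\int_{|\z|}^1\log\frac{r}{|\z|}\,r\om(r)\,dr$ is by definition $\om^\star(\z)$, the factor $2$ in the measure $2r\om(r)\,dr$ combines with the $\tfrac{p^2}{2}$ above to yield the constant $p^2$, and the remaining constant term integrates to $|f(0)|^p\om(\D)$. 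The only delicate point is that for $0<p<2$ the factor $|f|^{p-2}$ is singular at the zeros of $f$; since these are isolated and $|f|^{p-2}|f'|^2$ is locally integrable, one justifies the computation in the usual way by applying Green's formula to $(\e^2+|f|^2)^{p/2}$ (or on the complement of small discs about the zeros) and letting $\e\to0^+$.

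The third assertion is then immediate: taking $p=2$ gives $|f|^{p-2}=1$ and $p^2=4$, so $\|f\|_{A^2_\om}^2=4\int_\D|f'|^2\om^\star\,dA+\om(\D)|f(0)|^2=4\|f'\|_{A^2_{\om^\star}}^2+\om(\D)|f(0)|^2$. For the cone estimate I would first settle the case $n=1$ by a slicing and dilation argument. Fixing $|u|=R$ and setting $f_R(w)=f(Rw)$, the substitution $z=Rw$ turns the defining condition of $\Gamma(u)$ into $|\arg w-\arg u|<\tfrac12(1-|w|)$, carrying $\Gamma(u)$ onto the fixed Stolz cone at $e^{i\arg u}$, while $|f'(z)|^2\,dA(z)$ is invariant under this scaling; hence $\int_{\Gamma(u)}|f'|^2\,dA$ equals the ordinary Lusin area integral of $f_R$ at $e^{i\arg u}$. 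Integrating over $\arg u$ and invoking the classical area--integral characterization of $H^p$ \cite{FC} identifies the circular integral with $\|f_R\|_{H^p}^p=M_p^p(R,f)$ up to an additive $|f(0)|^p$; integrating against $2R\om(R)\,dR$ and using the polar formula above recovers $\|f\|_{A^p_\om}^p$ modulo $|f(0)|^p$, which is exactly the case $n=1$.

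For general $n$ I would reduce to $n=1$ through the equivalence of Lusin area functions of different orders, the weight $(1-|z/u|)^{2n-2}$ being precisely what absorbs the $n-1$ additional differentiations inside each cone. This order reduction is where the main difficulty lies: the clean dilation used for $n=1$ introduces a scaling factor $R^{2-2n}$ when $n\ge2$, so it cannot by itself produce the desired comparison, and one must instead appeal to the genuine, order--independent area--function equivalence together with the Fefferman--Stein tent--space machinery underlying \cite{FC}. That machinery is also the essential ingredient for the full range $0<p<\infty$, since for small $p$ the area--integral characterization of $H^p$ itself rests on good--$\lambda$ and stopping--time arguments rather than on any elementary estimate. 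By comparison, the first and third assertions are routine once the Hardy--Stein--Spencer identity is in hand.
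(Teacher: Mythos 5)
Your proposal is correct and follows essentially the route this survey intends: the paper states Theorem~\ref{ThmLittlewood-Paley} without proof, attributing it to the Hardy--Stein--Spencer identity \cite{Garnett1981} and the Fefferman--Stein area estimate \cite{FC} (the detailed argument being in \cite{PelRat}), and your Green's-formula/Fubini computation producing $\om^\star$ with the constant bookkeeping $\tfrac{p^2}{2}\cdot 2=p^2$, the dilation $f_R(w)=f(Rw)$ carrying $\Gamma(u)$ onto a fixed Stolz region so that the cone integral becomes the classical Lusin area function of $f_R$, and the order-independent equivalence of area functions for $n\ge 2$ are precisely those ingredients. The one point worth tightening is your closing discussion of $n\ge2$: the dilation argument does go through once the higher-order area equivalence is applied to each $f_R$, because the scaling factor $R^{2-2n}$ satisfies $R^{2-2n}\asymp 1$ for $\tfrac12\le R<1$ while the contribution from $|u|\le\tfrac12$ is absorbed by elementary local (Cauchy-type) estimates and the terms $\sum_{j=0}^{n-1}|f^{(j)}(0)|^p$, with constants allowed to depend on $\om$.
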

\par Next, we  present an equivalent norm for weighted Bergman spaces which has been very recently used
to describe the $q$-Carleson mesures for $A^p_\om$ when $\om\in\DD$ \cite{PelRatMathAnn}.

Let $f\in\H(\D)$, and define the \emph{non-tangential maximal
function} of $f$ in the (punctured) unit disc by
    $$\index{$N(f)$}\index{non-tangential maximal function}
    N(f)(u)=\sup_{z\in\Gamma(u)}|f(z)|,\quad
    u\in\D\setminus\{0\}.
    $$

\begin{lemma}\label{le:funcionmaximalangular}\cite[Lemma $4.4$]{PelRat}
Let $0<p<\infty$ and let $\om$ be a radial weight. Then there
exists a constant $C>0$ such that
    $$
    \|f\|^p_{A^p_\om}\le\|N(f)\|^p_{L^p_\om}\le C\|f\|^p_{A^p_\om}
    $$
for all $f\in\H(\D)$.\index{$N(f)$}\index{non-tangential maximal
function}
\end{lemma}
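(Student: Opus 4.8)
The plan is to prove the two inequalities separately. The left-hand inequality $\|f\|_{A^p_\om}^p\le\|N(f)\|_{L^p_\om}^p$ is essentially free: for $u=se^{i\theta}\in\D\setminus\{0\}$ the radial points $z=\rho e^{i\theta}$ with $\rho<s$ lie in $\Gamma(u)$, and letting $\rho\to s^-$ the continuity of $f$ gives $|f(u)|\le\sup_{z\in\Gamma(u)}|f(z)|=N(f)(u)$. Integrating this pointwise bound against $\om\,dA$ yields the claim, and I would dispose of it in a single line.

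The substance is the right-hand inequality, and the key observation I would exploit is that the interior approach regions $\Gamma(u)$, which open toward the origin, become \emph{standard} non-tangential approach regions anchored at a boundary point once we dilate. Fix $s\in(0,1)$ and set $f_s(w)=f(sw)$, an analytic function on a neighbourhood of $\overline{\D}$. For $u=se^{i\theta}$ and $z=\rho e^{i\phi}\in\Gamma(u)$ write $w=z/s=(\rho/s)e^{i\phi}$; since $z\in\Gamma(u)$ forces $|z|<|u|$, we have $w\in\D$, and the defining inequality $|\theta-\phi|<\tfrac12(1-\rho/s)$ becomes $|\theta-\arg w|<\tfrac12(1-|w|)$, which is exactly the condition that $w$ lie in a fixed-aperture Stolz region with vertex at the boundary point $e^{i\theta}$. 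As $f_s(w)=f(z)$, this gives the identity
\[
N(f)(se^{i\theta})=\sup_{w}\bigl\{|f_s(w)|:|\arg w-\theta|<\tfrac12(1-|w|)\bigr\}=\widetilde N(f_s)(e^{i\theta}),
\]
where $\widetilde N$ denotes the classical non-tangential maximal function on $\D$ with this aperture. I would check carefully that the aperture is independent of $s$ and $\theta$, so that all constants below are uniform.

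With this reduction in hand, the classical Hardy-space non-tangential maximal theorem applies on each circle. For $f_s\in H^p$ (here $f_s$ is even continuous on $\overline{\D}$) one has $\|\widetilde N(f_s)\|_{L^p(\T)}^p\le C_p\,\|f_s\|_{H^p}^p$, and since $f_s$ extends continuously to the boundary the right side equals $C_p\,M_p(s,f)^p$. Written out, this reads
\[
\frac{1}{2\pi}\int_0^{2\pi}N(f)(se^{i\theta})^p\,d\theta\le C_p\,M_p(s,f)^p=\frac{C_p}{2\pi}\int_0^{2\pi}|f(se^{i\theta})|^p\,d\theta,\quad 0<s<1.
\]
Finally I would pass to polar coordinates and integrate against $2s\,\om(s)\,ds$; because $\om$ is radial this is precisely the circle-by-circle comparison needed, and it yields $\|N(f)\|_{L^p_\om}^p\le C_p\|f\|_{A^p_\om}^p$ with no further hypothesis on $\om$.

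The main obstacle is not any single estimate but getting the scaling dictionary exactly right: one must verify that the transformed region is a genuine fixed-aperture non-tangential region at $e^{i\theta}$ and that the cited maximal theorem holds in the full range $0<p<\infty$ (for $p\le 1$ one invokes it through the subharmonicity of $|f_s|^{p}$, or equivalently works with $|f_s|^{p/2}$ and a harmonic majorant). I would also emphasise that the radiality of $\om$ is used decisively: it is exactly what allows the one-dimensional $H^p$ bound on each circle to be integrated against the weight, and this is why the statement requires no doubling or regularity assumption on $\om$.
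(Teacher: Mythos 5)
Your proposal is correct and follows essentially the same route as the paper: the paper's proof likewise rests on the identity $N(f)(re^{i\theta})=(f_r)^\star(e^{i\theta})$ for the dilation $f_r(w)=f(rw)$, applies the classical non-tangential maximal theorem \cite[Theorem~3.1 on p.~57]{Garnett1981} (valid for all $0<p<\infty$) circle by circle, and then integrates in polar coordinates against the radial weight. Your additional remarks --- the careful verification of the scaling dictionary, the radial-limit argument for $|f(u)|\le N(f)(u)$, and the comment on $p\le1$ --- merely make explicit steps the paper leaves implicit.
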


\begin{proof}
It follows from \cite[Theorem~3.1 on p.~57]{Garnett1981} that
there exists a constant $C>0$ such that the classical
non-tangential maximal function
    $$
    f^\star(\z)=\sup_{z\in\Gamma(\z)}|f(z)|\index{$f^\star$},\quad
    \z\in\T,
    $$
satisfies
    \begin{equation*}\label{Eq:Maximal}\index{$f^\star$}
    \|f^\star\|^p_{L^p(\T)}\le C\|f\|^p_{H^p}
    \end{equation*}
for all $0<p<\infty$ and $f\in\H(\D)$. Therefore
    \begin{equation*}\index{$N(f)$}\index{non-tangential maximal function}
    \begin{split}
    \|f\|^p_{A^p_\om}&\le\|N(f)\|^p_{L^p_\om}=\int_\D(N(f)(u))^p\om(u)\,dA(u)\\
    &=\int_0^1\om(r)r\int_{\T}((f_r)^\star(\z))^p\,|d\z|\,dr\\
    &\le C\int_0^1\om(r)r\int_{\T}f(r\z)^p\,|d\z|\,dr
    =C\|f\|^p_{A^p_\om},
    \end{split}
    \end{equation*}
and the assertion is proved.
\end{proof}
\subsection{Decomposition norm theorems}
The main purpose   of this section is to extend \cite[Theorem $4$]{PelRathg} to the case of when $\om\in\DD$.
Decomposition norm theorems have been obtained previously in \cite{MatelPavstu,Pav86,Pav87} for several type of mixed norm spaces.
For $0<p\le \infty$,
$0<q<\infty$,  and a radial weight~$\om$, the
 mixed norm space $H(p,q,\om)$ consists of those
$g\in\H(\D)$ such that
    \begin{equation*}
    \left\|g\right\|^q_{H(p,q,\om)}=\int_0^1 M^q_p(r,g)\om(r)\,dr<\infty.
    \end{equation*}
If in addition $-\infty<\b<\infty$, we will denote $g\in
H(p,\infty,\widehat{\om}^\b)$, whenever
    \begin{equation*}
    \left\|g\right\|_{H(p,\infty,\widehat{\om}^\b)}=\sup_{0<r<1}M_p(r,g)\ho(r)^\b<\infty.
    \end{equation*}
 It is clear that
$H(p,p,\om)=A^p_\om$. The mixed norm spaces play an essential role
in the closely related question of studying the coefficient
multipliers and the generalized Hilbert operator
\begin{equation*}
    \mathcal{H}_g(f)(z)=\int_0^1f(t)g'(tz)\,dt,\quad g\in\H(\D),
    \end{equation*}
 on Hardy and weighted Bergman spaces~\cite{ArJeVu,GPPS14,PelRathg}.
\par In order to give the precise statement of the main result of this section, we need
to introduce some more notation. To do this, let $\om$ be a radial weight such
that $\int_0^1 \om(r)\,dr=1$. For each $\a>0$ and
$n\in\N\cup\{0\}$, let $r_n=r_n(\om,\a)\in[0,1)$ be defined by
    \begin{equation}\label{rn}
    \widehat{\om}(r_n)=\int_{r_n}^1\om(r)\,dr=\frac{1}{2^{n\a}}.
    \end{equation}
Clearly, $\{r_n\}_{n=0}^\infty$ is an increasing sequence of
distinct points on $[0,1)$ such that $r_0=0$ and $r_n\to1^-$, as
$n\to\infty$. For $x\in[0,\infty)$, let $E(x)$ denote the integer
such that $E(x)\le x<E(x)+1$, and set
$M_n=E\left(\frac{1}{1-r_{n}}\right)$ for short. Write
    $$
    I(0)=I_{\om,\alpha}(0)=\left\{k\in\N\cup\{0\}:k<M_1\right\}
    $$
and
   \begin{equation*}
    I(n)=I_{\om,\a}(n)=\left\{k\in\N:M_n\le
    k<M_{n+1}\right\}
  \end{equation*}
for all $n\in\N$. If
$f(z)=\sum_{n=0}^\infty a_nz^n$ is analytic in~$\D,$ define the
polynomials $\Delta^{\om,\a}_nf$ by
    \[
    \Delta_n^{\om,\a}f(z)=\sum_{k\in I_{\om,\a}(n)} a_kz^k,\quad n\in\N\cup\{0\}.
    \]
\begin{theorem}\label{th:dec1}
Let $1<p<\infty$, $0<\a<\infty$ and $\om\in\DD$ such that
$\int_0^1\om(r)\,dr=1$, and let $f\in\H(\D)$.
\begin{itemize}
\item[\rm(i)] If $0<q<\infty$, then $f\in H(p,q,\om)$ if and only
if
    \begin{equation*}
    \sum_{n=0}^\infty 2^{-n\alpha}\|\Delta^{\om,\a}_n f\|_{H^p}^q<\infty.
    \end{equation*}
Moreover,
    \[
    \|f\|_{H(p,q,\om)}\asymp \bigg(\sum_{n=0}^\infty 2^{-n\a}
    \|\Delta^{\om,\a}_n f\|_{H^p}^q\bigg)^{1/q}.
    \]
\item[\rm(ii)] If $0<\beta<\infty$, then $f\in
H(p,\infty,\ho^\b)$ if and only if
    $$
    \sup_n 2^{-n\a\beta} \| \Delta^{\om,\a}_n f\|_{H^p}<\infty.
    $$
Moreover,
    \[
    \|f\|_{H(p,\infty,\ho^\b)}\asymp \sup_n2^{-n\a\beta}\|
    \Delta^{\om,\a}_n f\|_{H^p}.
    \]
\end{itemize}
\end{theorem}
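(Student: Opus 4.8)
The plan is to transfer everything to the Hardy space $H^p$ and to exploit that, for $1<p<\infty$, the Riesz projection and smooth Fourier multipliers act boundedly on $H^p$. First I would record the weight estimates that follow from $\om\in\DD$ and Lemma~\ref{Lemma:replacement-Lemmas-Memoirs}: from \eqref{rn} one has $\int_{r_n}^{r_{n+1}}\om(r)\,dr=2^{-n\a}(1-2^{-\a})\asymp2^{-n\a}$, while $M_n=E(1/(1-r_n))$ forces $1-r_n\asymp1/M_n$; and the doubling of $\ho$ yields a fixed integer $m_0=m_0(\om,\a)$ with $M_{n+m_0}\ge 2M_n$ for all $n$, so the thresholds $M_n$ grow geometrically along arithmetic progressions of step $m_0$ (individual blocks $I_{\om,\a}(n)$ may nonetheless be huge or even empty).

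Next I would isolate the two $H^p$-multiplier facts on which both directions rest; here $1<p<\infty$ is essential. Since $\Delta^{\om,\a}_n=S_{M_{n+1}}-S_{M_n}$ is a difference of partial-sum projections $S_N$, the M.~Riesz theorem gives $M_p(r,\Delta^{\om,\a}_n f)\le C\,M_p(r,f)$ uniformly in $n$ and $r$. Second, since on the block $k\in I_{\om,\a}(n)$ the sequence $r_{n+1}^{\,k}$ is monotone and confined to $[e^{-1},1]$ (because $k(1-r_{n+1})\le M_{n+1}(1-r_{n+1})\asymp1$), the Marcinkiewicz/Hardy--Littlewood multiplier theorem yields $M_p(r_{n+1},\Delta^{\om,\a}_n f)\asymp\|\Delta^{\om,\a}_n f\|_{H^p}$, together with the one-sided damping $M_p(r,g)\lesssim r^N\|g\|_{H^p}$ whenever $g$ has spectrum in $[N,\infty)$. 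Crucially, the first comparison requires no lower bound on the block size, only $M_n\le M_{n+1}$.

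For the lower bound in (i) I would, for each nonempty block, use the monotonicity of $r\mapsto M_p(r,\Delta^{\om,\a}_n f)$ to write
\[
\int_{r_{n+1}}^{r_{n+2}}M_p(r,\Delta^{\om,\a}_n f)^q\,\om(r)\,dr\ge M_p(r_{n+1},\Delta^{\om,\a}_n f)^q\int_{r_{n+1}}^{r_{n+2}}\om(r)\,dr\asymp 2^{-n\a}\|\Delta^{\om,\a}_n f\|_{H^p}^q,
\]
and bound the left-hand side by $\int_{r_{n+1}}^{r_{n+2}}M_p(r,f)^q\om\,dr$ via the projection estimate; summing over $n$ gives $\sum_n 2^{-n\a}\|\Delta^{\om,\a}_n f\|_{H^p}^q\lesssim\|f\|^q_{H(p,q,\om)}$. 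For the reverse inequality I would split $f=\sum_m\Delta^{\om,\a}_m f$ and, for $r\in[r_n,r_{n+1}]$, estimate $M_p(r,\Delta^{\om,\a}_m f)\le\|\Delta^{\om,\a}_m f\|_{H^p}$ when $m\le n$, and $M_p(r,\Delta^{\om,\a}_m f)\lesssim r_{n+1}^{M_m}\|\Delta^{\om,\a}_m f\|_{H^p}\lesssim e^{-2^{\lfloor(m-n-1)/m_0\rfloor}}\|\Delta^{\om,\a}_m f\|_{H^p}$ when $m>n$, the super-geometric decay coming from $M_m/M_{n+1}\ge 2^{\lfloor(m-n-1)/m_0\rfloor}$. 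Raising to the power $q$, integrating (so that $\int_{r_n}^{r_{n+1}}\om\asymp 2^{-n\a}$), and then applying a discrete Hardy inequality to the cumulative low-frequency sum together with a convolution inequality to the rapidly decaying high-frequency sum, both carried with the geometric weight $2^{-n\a}$, yields $\|f\|^q_{H(p,q,\om)}\lesssim\sum_m 2^{-m\a}\|\Delta^{\om,\a}_m f\|_{H^p}^q$. Part (ii) follows the same scheme with integrals replaced by suprema, using that $\ho(r)\asymp 2^{-n\a}$ on each $[r_n,r_{n+1}]$ by doubling.

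The main obstacle is the uniformity of the multiplier estimates across blocks of completely uncontrolled length: for a general $\om\in\DD$ the ratios $M_{n+1}/M_n$ need not be bounded (they are for the regular weights treated in \cite{PelRathg}), so no single block corresponds to one dyadic frequency scale. The resolution is that the decisive comparison $M_p(r_{n+1},\cdot)\asymp\|\cdot\|_{H^p}$ is evaluated at the \emph{right} endpoint $r_{n+1}$, where oversized blocks cause no loss, while in the upper bound oversized blocks only help by producing faster decay; the doubling-generated step $m_0$ then guarantees enough decay to run the Hardy inequalities. Throughout, the hypothesis $1<p<\infty$ is indispensable, entering precisely through the boundedness on $H^p$ of the Riesz projection and of the Marcinkiewicz multipliers.
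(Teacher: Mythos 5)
Your argument is correct, and its architecture is essentially the paper's: the same blocks and shells, the shellwise lower bound through the M.~Riesz projection, the Minkowski-plus-damping upper bound, and the same use of Lemma~\ref{Lemma:replacement-Lemmas-Memoirs}(ii) together with \eqref{rn} to extract the geometric growth $M_{n+j}\gtrsim 2^{j\a/\b}M_n$ that neutralizes blocks of uncontrolled length. The differences sit at the level of the auxiliary lemmas, and they cut in opposite directions. For the lower bound the paper needs no multiplier theorem at all: the elementary Mateljevi\'c--Pavlovi\'c inequality \eqref{Eq-MaPa} already gives $M_p(r_{n+1},\Delta^{\om,\a}_nf)\ge r_{n+1}^{M_{n+1}}\|\Delta^{\om,\a}_nf\|_{H^p}\gtrsim\|\Delta^{\om,\a}_nf\|_{H^p}$ for \emph{every} $p>0$, so your Marcinkiewicz argument (the monotone multiplier $r_{n+1}^{-k}$ of uniformly bounded variation on each block) is valid for $1<p<\infty$ but heavier than necessary; consequently your closing claim that $p>1$ enters through the multipliers is slightly off --- in the paper it enters only through the Riesz projection (plus $p\ge1$ for Minkowski), which is exactly why the one-sided estimates survive for $0<p\le1$ in Proposition~\ref{le:de1}. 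For the upper bound the paper feeds $M_p(r,f)\le\sum_n r^{M_n}\|\Delta^{\om,\a}_nf\|_{H^p}$ into the positive-coefficient power series result, Proposition~\ref{pr:DecompApw}, whose proof (H\"older against $\eta_\g(r)=\sum_n2^{n\g}r^{M_n}\lesssim\ho(r)^{-\g/\a}$ from Lemma~\ref{le:serie}, plus the moment description $\om_x\asymp\ho\left(1-\frac1x\right)$ of $\DD$) packages precisely the estimates you carry out by hand; your shellwise split on $[r_n,r_{n+1}]$ into the cumulative low-frequency sum (discrete Hardy inequality with the geometric weight $2^{-n\a}$: $q$-subadditivity when $q\le1$, H\"older with a slack $2^{\d(m-n)}$, $0<\d<\a/q$, when $q>1$) and the super-geometrically decaying tail $r_{n+1}^{M_m}\le e^{-cM_m/M_{n+1}}\le e^{-c2^{\lfloor(m-n-1)/m_0\rfloor}}$ (a convolution estimate in which the super-geometric decay beats the geometric growth $2^{(m-n)\a}$) is a correct, self-contained reproof of the right-hand inequality in \eqref{j13}, and it bypasses the two-step extension (first $r=r_N$, then general $r$) in Lemma~\ref{le:serie}(i). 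Part (ii) is handled identically in both treatments; note that your observation $\ho(r)\asymp2^{-n\a}$ on $[r_n,r_{n+1}]$ needs only the monotonicity of $\ho$, not doubling. The single piece of housekeeping worth making explicit is that both your tail estimate and the shellwise lower bound require $r_{n+1}$ bounded away from $0$; since $r_n\to1^-$, the finitely many small shells are absorbed into constants depending on $\om$ and $\a$, exactly as in the paper's bound $e^{-r_2C2^{j\a/\b}}$ in the proof of Lemma~\ref{le:serie}.
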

\par The  proof of Theorem \ref{th:dec1} follows that of \cite[Theorem $4$]{PelRat}, and it only distinguishes from it
because of the technicalities of broadening the class $\R\cup\I$ to $\DD$.
Some  previous results are needed. Recall that a
function $h$ is called essentially decreasing if there exists a
positive constant $C=C(h)$ such that $h(x)\le C h(y)$ whenever
$y\le x$. Essentially increasing functions are defined in an
analogous manner.

\begin{lemma}\label{le:serie}
Let $\om\in\DD$ such that $\int_0^1 \om(r)\,dr=1$. For each
$\a>0$ and $n\in\N\cup\{0\}$, let $r_n=r_n(\om,\a)\in[0,1)$ be
defined by \eqref{rn}. Then the following assertions hold:
\begin{enumerate}
\item[\rm(i)] For each $\gamma>0$, there exists
$C=C(\a,\gamma,\om)>0$ such that
    \begin{equation}\label{serie}
    \eta_\gamma(r)=\sum_{n=0}^\infty
    2^{n\gamma}r^{M_n}\le
    C\,\widehat{\om}(r)^{-\frac{\gamma}{\a}},\quad
    0\le r<1.
    \end{equation}
    \item[\rm(ii)] For each $0<\b<1$, there exists
$C=C(\a,\b,\om)>0$ such that
    \begin{equation}\label{momentos}
    2^{-n\a\b}\int_{0}^1
    \frac{r^{M_n}\om(r)}{\widehat{\om}(r)^{\b}}\,dr\le C\int_{0}^1
    r^{M_n}\om(r)\,dr.
    \end{equation}
\end{enumerate}
\end{lemma}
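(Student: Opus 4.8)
The plan is to treat the two parts separately, extracting from the hypothesis $\om\in\DD$ the single quantitative fact that the points $r_n$ approach the boundary at least geometrically. Concretely, I would first establish that there is a fixed $j_0=j_0(\a,\om)\in\N$ with $1-r_{n+j_0}\le\tfrac12(1-r_n)$ for all $n$. This follows directly from the doubling inequality $\widehat{\om}(r_n)\le C\widehat{\om}(\frac{1+r_n}{2})$ together with the defining relation $\widehat{\om}(r_{n+j})=2^{-j\a}\widehat{\om}(r_n)$ from \eqref{rn}: choosing $j_0$ so that $2^{j_0\a}\ge C$ forces $\widehat{\om}(r_{n+j_0})\le\widehat{\om}(\frac{1+r_n}{2})$, whence $r_{n+j_0}\ge\frac{1+r_n}{2}$ because $\widehat{\om}$ is decreasing. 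Iterating gives $1-r_{n+mj_0}\le 2^{-m}(1-r_n)$, and since $E(x)\ge x/2$ for all $x\ge1$ this shows $M_n=E(1/(1-r_n))\ge\frac{1}{2(1-r_n)}$ grows at least geometrically in $n$.

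For part (i) I would fix $r$, choose $N$ with $r_N\le r<r_{N+1}$, and split $\eta_\g(r)=\sum_{n\le N}+\sum_{n>N}$. In the head, $r^{M_n}\le1$ yields $\sum_{n\le N}2^{n\g}\lesssim 2^{N\g}$, and since $\widehat{\om}(r)\le\widehat{\om}(r_N)=2^{-N\a}$ we get $2^{N\g}\lesssim\widehat{\om}(r)^{-\g/\a}$, the desired bound. For the tail I would combine the elementary inequality $r^{M}\le e^{-M(1-r)}$ with $M_{N+j}\ge\frac{1}{2(1-r_{N+j})}$ and the geometric decay above. Using $1-r>1-r_{N+1}$ and decay over $j-1$ steps gives $\frac{1-r}{1-r_{N+j}}\gtrsim 2^{(j-1)/j_0}$, hence $r^{M_{N+j}}\le\exp(-c\,2^{(j-1)/j_0})$ uniformly in $r$. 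Therefore $\sum_{j\ge1}2^{j\g}r^{M_{N+j}}$ is bounded by an absolute constant, and as $\widehat{\om}(r)^{-\g/\a}\ge\widehat{\om}(0)^{-\g/\a}=1$ the tail is again $\lesssim\widehat{\om}(r)^{-\g/\a}$.

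For part (ii) the cleaner route avoids splitting and rests on an auxiliary doubling weight. I would set $W=\om/\widehat{\om}^{\,\b}$ and compute, using $\widehat{\om}\,'=-\om$ and $0<\b<1$, that $\widehat{W}(r)=\int_r^1 W(s)\,ds=\frac{1}{1-\b}\widehat{\om}(r)^{1-\b}$. Since $t\mapsto t^{1-\b}$ is increasing and $\widehat{\om}$ is doubling, $\widehat{W}$ is doubling, i.e.\ $W\in\DD$. Both integrals in \eqref{momentos} are then moments: the right-hand side is $\om_{M_n}$ and the left-hand side is $2^{-n\a\b}W_{M_n}$. Applying the moment asymptotic of part~(vi) of Lemma~\ref{Lemma:replacement-Lemmas-Memoirs} to $\om$ and to $W$ gives $\om_{M_n}\asymp\widehat{\om}(1-\frac1{M_n})$ and $W_{M_n}\asymp\widehat{\om}(1-\frac1{M_n})^{1-\b}$. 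Since $M_n\asymp\frac{1}{1-r_n}$ the distances $1/M_n$ and $1-r_n$ are comparable, so doubling yields $\widehat{\om}(1-\frac1{M_n})\asymp\widehat{\om}(r_n)=2^{-n\a}$. Substituting, the left side is $\asymp 2^{-n\a\b}2^{-n\a(1-\b)}=2^{-n\a}\asymp\om_{M_n}$, which is the right side up to a constant (indeed the two sides are comparable).

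The main obstacle, and the one genuinely new input beyond bookkeeping, is the geometric separation $1-r_{n+j_0}\le\frac12(1-r_n)$: everything in (i) hinges on the tail decaying doubly exponentially, which requires $M_n$ to grow at least geometrically, and this is exactly where $\om\in\DD$ enters (a merely integrable radial weight need not have this property). For (ii) the work is front-loaded into the observation that $\om/\widehat{\om}^{\,\b}\in\DD$ for $0<\b<1$; once that is in hand, the estimate is a direct application of part~(vi). The remaining ingredients---the inequalities $E(x)\ge x/2$ and $r^{M}\le e^{-M(1-r)}$, and the comparison $\widehat{\om}(1-1/M_n)\asymp\widehat{\om}(r_n)$---are routine.
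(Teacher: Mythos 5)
Your proof is correct, and it splits into a part that mirrors the paper and a part that genuinely deviates. For (i) you follow essentially the paper's argument: the head $\sum_{n\le N}$ is handled by the geometric series together with $\widehat{\om}(r)\le\widehat{\om}(r_N)=2^{-N\a}$, and the tail by the geometric decay of $1-r_n$, which makes $r^{M_{N+j}}$ decay doubly exponentially in $j$ and so beats the factor $2^{j\g}$. The paper extracts that decay from Lemma~\ref{Lemma:replacement-Lemmas-Memoirs}(ii) in the form $\frac{1-r_n}{1-r_{n+j}}\ge C2^{\frac{j\a}{\b}}$ and proves \eqref{serie} first only at $r=r_N$, dismissing general $r$ as standard; you derive the same separation with a fixed step $j_0$ directly from the doubling inequality and run the estimate for arbitrary $r$ with $r_N\le r<r_{N+1}$ in one pass, which is marginally more self-contained. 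For (ii) your route is genuinely different: the paper splits the integral at $r_n$, bounding $\widehat{\om}(r)^{-\b}\le 2^{n\a\b}$ on $[0,r_n]$ and invoking the distortion identity $\psi_{\widetilde{\om}}=\frac{1}{1-\b}\psi_{\om}$ from \cite[Lemma~1.4(iii)]{PelRat} on $[r_n,1]$; you instead observe that $W=\om/\widehat{\om}^{\,\b}$ satisfies $\widehat{W}=\frac{1}{1-\b}\widehat{\om}^{\,1-\b}$, hence $W\in\DD$, and apply the moment asymptotics of Lemma~\ref{Lemma:replacement-Lemmas-Memoirs}(vi) to both $\om$ and $W$, together with the routine comparison $\widehat{\om}\left(1-\frac{1}{M_n}\right)\asymp\widehat{\om}(r_n)$. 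This buys a split-free argument that stays entirely inside the paper's own Lemma~\ref{Lemma:replacement-Lemmas-Memoirs}, avoids the external citation, and in fact gives two-sided comparability of the two integrals, whereas the paper's splitting yields only the stated one-sided bound.

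Two small repairs are needed, neither a real gap. First, to infer $r_{n+j_0}\ge\frac{1+r_n}{2}$ you should choose $2^{j_0\a}>C$ strictly: then $\widehat{\om}(r_{n+j_0})=2^{-j_0\a}\widehat{\om}(r_n)<C^{-1}\widehat{\om}(r_n)\le\widehat{\om}\left(\frac{1+r_n}{2}\right)$, and the strict inequality plus monotonicity of $\widehat{\om}$ gives the claim; with a non-strict inequality the inference can fail where $\widehat{\om}$ is locally constant. Second, the closing sentence of your part (i) drops a factor: the tail is $\sum_{n>N}2^{n\g}r^{M_n}=2^{N\g}\sum_{j\ge1}2^{j\g}r^{M_{N+j}}$, so after bounding the $j$-sum by a constant you must conclude via $2^{N\g}\le\widehat{\om}(r)^{-\frac{\g}{\a}}$ (which your head estimate already established), not via $\widehat{\om}(r)^{-\frac{\g}{\a}}\ge1$. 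Both fixes use only material already in your write-up.
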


\begin{proof}
(i). We will begin with proving \eqref{serie} for $r=r_N$, where
$N\in\N$. To do this, note first that
    \begin{equation}\label{serie2}
    \begin{split}
    \sum_{n=0}^N2^{n\gamma}r_N^{M_n}
    \le \frac{2^\gamma}{2^\gamma-1}
    \,\widehat{\om}(r_N)^{-\frac{\gamma}{\a}}
    \end{split}
    \end{equation}
by \eqref{rn}. To deal with the remainder of the sum, we apply Lemma
\ref{Lemma:replacement-Lemmas-Memoirs}(ii) and \eqref{rn} to find
$\b=\beta(\om)>0$ and $C=C(\b,\om)>0$ such that
    $$
    \frac{1-r_n}{1-r_{n+j}}\ge C\left(\frac{\widehat{\om}(r_n)}{\widehat{\om}(r_{n+j})}\right)^{1/\b}
    = C2^{\frac{j\a}{\b}},\quad n,j\in\N\cup\{0\}.
    $$
This, the inequality $\log\frac1x\ge1-x$, $0<x\le1$, and
\eqref{rn} give
    \begin{equation*}
    \begin{split}
    \sum_{n=N+1}^\infty 2^{n\gamma}r_N^{M_n}
    &\le2^{N\g}\sum_{j=1}^\infty 2^{j\g}e^{-r_{N+j}\frac{1-r_N}{1-r_{N+j}}}
    \le2^{N\g}\sum_{j=1}^\infty 2^{j\g}e^{-r_2C2^{\frac{j\a}{\b}}}\\
    &=C(\b,\a,\gamma,\om)\,\widehat{\om}(r_N)^{-\frac{\gamma}{\a}}.
    \end{split}
    \end{equation*}
Since $\b=\b(\om)$, this together with \eqref{serie2} gives
\eqref{serie} for $r=r_N$, where $N\in\N$. Now, using standard arguments, it implies \eqref{serie} for any $r\in(0,1)$.

(ii). Let us write $\widetilde{\om}(r)=\frac{\om(r)}{\widehat{\om}(r)^{\b}}$. Clearly,
    \begin{equation}\label{60}
    \begin{split}
    2^{-n\a\b}\int_{0}^{r_n}
    r^{M_n}\widetilde{\om}(r)\,dr&\le
    \frac{2^{-n\a\b}}{\widehat{\om}(r_n)^{\b}} \int_{0}^{r_n}
    r^{M_n}\om(r)\,dr\le \int_{0}^1 r^{M_n}\om(r)\,dr.
    \end{split}
    \end{equation}
Moreover, \cite[Lemma $1.4$ (iii)]{PelRat} yields
    \begin{equation}\label{61}
    \begin{split}
    2^{-n\a\b}\int_{r_n}^1
    r^{M_n}\widetilde{\om}(r)\,dr
    &\le2^{-n\a\b}\widetilde{\om}(r_n)\psi_{\widetilde{\om}}(r_n)
    =\frac{2^{-n\a\b}}{1-\b}\widetilde{\om}(r_n)\psi_{\om}(r_n)\\
    &=\frac{1}{1-\b}\int_{r_n}^1\om(r)\,dr\le C(\b,\a,\om)\int_{r_n}^1 r^{M_n}\om(r)\,dr.
    \end{split}
    \end{equation}
By combining \eqref{60} and \eqref{61} we obtain (ii).
\end{proof}

We now present a result on power series with positive
coefficients. This result will play a crucial role in the proof of
Theorem~\ref{th:dec1}.

\begin{proposition}\label{pr:DecompApw}
Let $0<p,\alpha<\infty$ and $\om\in\DD$ such that $\int_0^1
\om(r)\,dr=1$. Let $f(r)=\sum_{k=0}^\infty a_k r^k$, where $a_k\ge
0$ for all $k\in\N\cup\{0\}$, and denote $t_n=\sum_{k\in
I_{\om,\a}(n)}a_k$. Then there exists a constant $C=C(p,\a,\om)>0$ such that
    \begin{equation}\label{j13}
    \frac{1}{C}\sum_{n=0}^\infty 2^{-n\a}t_n^p\le \int_{0}^1
    f(r)^p\om(r)\,dr\le C \sum_{n=0}^\infty 2^{-n\a}t_n^p.
    \end{equation}
\end{proposition}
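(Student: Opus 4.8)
The plan is to prove the two-sided estimate \eqref{j13} by comparing $f$ with its partial sums over the blocks $I_{\om,\a}(n)$, using two ingredients: the moment normalization $\int_0^1 r^{M_n}\om(r)\,dr\asymp 2^{-n\a}$ and the monotonicity of $f$ on $[0,1)$ (valid since $a_k\ge0$). Writing $f_n(r)=\Delta_n^{\om,\a}f(r)=\sum_{k\in I_{\om,\a}(n)}a_k r^k$, so that $f=\sum_{n\ge0}f_n$ and $f_n(1)=t_n$, the normalization follows from Lemma~\ref{Lemma:replacement-Lemmas-Memoirs}(vi): since $M_n=E(1/(1-r_n))\asymp 1/(1-r_n)$, one has $\int_0^1 r^{M_n}\om(r)\,dr=\om_{M_n}\asymp\widehat{\om}(1-1/M_n)\asymp\widehat{\om}(r_n)=2^{-n\a}$, the last comparison being the doubling property at comparable scales. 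I would also record the elementary block bounds $f_n(r)\le t_n r^{M_n}$ (for $n\ge1$, since $k\ge M_n$ there) and $f_n(r)\ge t_n r^{M_{n+1}}$ (since $k<M_{n+1}$), both for $r\in[0,1)$.

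For the lower bound, which I expect to hold for every $p\in(0,\infty)$ by a single argument, I would \emph{not} try to bound $f$ from below on the whole annulus $[r_n,r_{n+1}]$, but instead use monotonicity. From the second block bound and $M_{n+1}\le 1/(1-r_{n+1})$ one gets $f(r_{n+1})\ge f_n(r_{n+1})\ge t_n r_{n+1}^{M_{n+1}}\ge c\,t_n$, with $c=c(\om,\a)>0$ because $r_{n+1}^{M_{n+1}}\to e^{-1}$ and each term is positive. Since $f$ increases, $f(r)\ge c\,t_n$ for all $r\ge r_{n+1}$, in particular on $J_{n+1}=[r_{n+1},r_{n+2}]$, where $\int_{J_{n+1}}\om=\widehat{\om}(r_{n+1})-\widehat{\om}(r_{n+2})=2^{-(n+1)\a}(1-2^{-\a})\asymp 2^{-n\a}$. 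As the $J_{n+1}$ are pairwise disjoint, summing $\int_{J_{n+1}}f^p\om\gtrsim t_n^p 2^{-n\a}$ gives $\int_0^1 f^p\om\ge\sum_{n}\int_{J_{n+1}}f^p\om\gtrsim\sum_n t_n^p 2^{-n\a}$.

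For the upper bound I would split on $p$. When $0<p\le1$, subadditivity of $x\mapsto x^p$ yields $f^p\le\sum_n f_n^p$, so $\int_0^1 f^p\om\le\sum_n\int_0^1 f_n^p\om$; bounding $f_n(r)\le t_n r^{M_n}$ reduces matters to $\int_0^1 r^{pM_n}\om\asymp\widehat{\om}(1-1/(pM_n))\lesssim 2^{-n\a}$, the last step being Lemma~\ref{Lemma:replacement-Lemmas-Memoirs}(ii) (the passage from scale $1/M_n$ to $1/(pM_n)$ costs only a fixed factor $p^{-\b}$), with the finitely many indices where $pM_n<1$ and the $n=0$ term treated by the trivial bound $\int_0^1 f_0^p\om\le t_0^p$. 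When $p>1$ subadditivity fails, and this is the computational crux: using $f\le t_0+\sum_{n\ge1}t_n r^{M_n}$ and H\"older with a geometric weight $b_n=2^{-n\lambda}$,
\begin{equation*}
\Big(\sum_{n\ge1}t_n r^{M_n}\Big)^p\le\Big(\sum_{n\ge1}t_n^p 2^{-n\lambda(p-1)}r^{M_n}\Big)\Big(\sum_{n\ge1}2^{n\lambda}r^{M_n}\Big)^{p-1}.
\end{equation*}
Choosing $0<\lambda<\a/(p-1)$ and setting $\b=\lambda(p-1)/\a\in(0,1)$, Lemma~\ref{le:serie}(i) bounds the second factor by $C\widehat{\om}(r)^{-\b}$; integrating and invoking Lemma~\ref{le:serie}(ii) gives $\int_0^1 r^{M_n}\widehat{\om}(r)^{-\b}\om(r)\,dr\lesssim 2^{n\a\b}2^{-n\a}$, so that $\int_0^1 f^p\om\lesssim t_0^p+\sum_{n\ge1}t_n^p 2^{-n\a\b}2^{-n\a(1-\b)}=\sum_n t_n^p 2^{-n\a}$.

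The main obstacle is twofold. Conceptually, for a general $\om\in\DD$ the radii $r_n$ need \emph{not} satisfy $1-r_n\asymp 1-r_{n+1}$ — this separation holds on $\R\cup\I$ but fails for fast-decaying doubling weights — so the naive lower bound ``$f\gtrsim t_n$ on $[r_n,r_{n+1}]$'' is unavailable; the monotonicity device above is what replaces it and, pleasantly, works uniformly in $p$. Technically, the hardest computation is the $p>1$ upper bound, where the two parts of Lemma~\ref{le:serie} must be dovetailed precisely: part~(i) produces the power $\widehat{\om}(r)^{-\b}$ and part~(ii) absorbs it, and the exponents of $2^{-n\a}$ cancel exactly only under the constraint $\b=\lambda(p-1)/\a<1$.
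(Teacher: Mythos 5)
Your proof is correct and follows essentially the same route as the paper's: the same lower bound obtained by integrating $f(r)\ge t_n r_{n+1}^{M_{n+1}}\gtrsim t_n$ over $[r_{n+1},r_{n+2}]$ (the paper phrases this via $f(r)\ge\sum_{k\le n}t_k r_{n+1}^{M_{k+1}}$ rather than monotonicity, but it is the same estimate), the same H\"older splitting with geometric weight $2^{-n\gamma(p-1)}$ combined with Lemma~\ref{le:serie}(i)--(ii) for $p>1$, and the same subadditivity plus Lemma~\ref{Lemma:replacement-Lemmas-Memoirs}(vi),(ii) argument for $0<p\le1$. Your separate treatment of the $n=0$ block (where $f_0(r)\le t_0$ but not $t_0 r^{M_0}$) is in fact slightly more careful than the paper's write-up.
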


\begin{proof}
We will use ideas from the proof of \cite[Theorem~6]{MatelPav}.
The definition \eqref{rn} yields
    \begin{equation*}
    \begin{split}
    \int_{0}^1f(r)^p\om(r)\,dr &\ge
    \sum_{n=0}^\infty\int_{r_{n+1}}^{r_{n+2}}\left(\sum_{k=0}^\infty t_k
    r^{M_{k+1}}\right)^p\om(r)\,dr\\
    & \ge \sum_{n=0}^\infty\left(\sum_{k=0}^n t_k r_{n+1}^{M_{k+1}}\right)^p\int_{r_{n+1}}^{r_{n+2}}\om(r)\,dr\\
    & \ge \left(1-\frac1{2^\a}\right)\sum_{n=0}^\infty t^p_n r_{n+1}^{pM_{n+1}}2^{(-n-1)\a}\ge C \sum_{n=0}^\infty t^p_n2^{-n\a},
    \end{split}
    \end{equation*}
where $C=C(p,\a,\om)>0$ is a constant. This gives the first
inequality in~\eqref{j13}.

To prove the second inequality in~\eqref{j13}, let first $p>1$ and
take $0<\g<\frac{\a}{p-1}$. Then H\"older's inequality gives
    \begin{equation*}\label{10.}
    \begin{split}
    f(r)^p\le \left(\sum_{n=0}^\infty t_n r^{M_{n}}\right)^p\le
    \eta_{\g}(r)^{p-1}\sum_{n=0}^\infty 2^{-n\gamma(p-1)}t_n^p
    r^{M_{n}}.
    \end{split}
    \end{equation*}
Therefore, by \eqref{serie} and \eqref{momentos} in
Lemma~\ref{le:serie} and  Lemma~\ref{Lemma:replacement-Lemmas-Memoirs}(vi) there exist
constants $C_1=C_1(\a,\g,p,\om)>0$, $C_2=C_2(\a,\g,p,\om)>0$ and
$C_3=C_3(\a,\g,p,\om)>0$ such that
    \begin{equation*}
    \begin{split}
    \int_{0}^1f(r)^p\om(r)\,dr &\le \sum_{n=0}^\infty2^{-n\gamma(p-1)}t_n^p\int_{0}^1
    r^{M_{n}}\eta_{\g}(r)^{p-1}\om(r)\,dr\\
    &\le C_1\sum_{n=0}^\infty  2^{-n\gamma(p-1)}t_n^p\int_{0}^1\frac{r^{M_{n}}\om(r)}{\widehat{\om}(r)^{\frac{\gamma(p-1)}{\a}}}\,dr\\
    &\le C_2\sum_{n=0}^\infty t^p_n\int_{0}^1 r^{M_{n}}\om(r)\,dr\\
    &\le C_3\sum_{n=0}^\infty t^p_n\,\widehat{\om}(r_n)\,dr=C_3\sum_{n=0}^\infty
    t^p_n2^{-n\alpha}.
    \end{split}
    \end{equation*}
Since $\g=\g(\a,p)$, this gives the assertion for $1<p<\infty$.
\par If  $0<p\le1$, then \begin{equation*}
    \begin{split}
    f(r)^p\le \left(\sum_{n=0}^\infty t_n r^{M_{n}}\right)^p\le
    \sum_{n=0}^\infty t_n^p
    r^{M_{n}p},
    \end{split}
    \end{equation*}
    so using  Lemma~\ref{Lemma:replacement-Lemmas-Memoirs}(vi) and (ii), there exists
a constant $C_1=C_1(\a,\g,p,\om)>0$ such that
\begin{equation*}
    \begin{split}
    \int_{0}^1f(r)^p\om(r)\,dr &\le \sum_{n=0}^\infty t^p_n\int_{0}^1 r^{pM_{n}}\om(r)\,dr
    \\
    &\le C_1\sum_{n=0}^\infty t^p_n\,\widehat{\om}(r_n)=C_1\sum_{n=0}^\infty
    t^p_n2^{-n\alpha}.
    \end{split}
    \end{equation*}
    This finishes the proof.
\end{proof}

Next, for
$g(z)=\sum_{k=0}^\infty b_k z^k\in\H(\D)$ and
$n_1,n_2\in\N\cup\{0\}$, we set
    $$
    S_{n_1,n_2}g(z)=\sum_{k=n_1}^{n_2-1}b_kz^k,\quad n_1<n_2.
    $$
The chain of inequalities
\begin{equation}\label{Eq-MaPa}
     r^{n_2}\|S_{n_1,n_2}g\|_{H^p}\le M_p(r,S_{n_1,n_2}g)\le r^{n_1}\|S_{n_1,n_2}g\|_{H^p}, \quad
     0<r<1,
    \end{equation}
follows from \cite[Lemma~3.1]{MatelPavstu}.
\begin{lemma}\label{le:4}
Let $0<p\le \infty$ and $n_1,n_2\in\N$ with $n_1<n_2$. If
$g(z)=\sum_{k=0}^{\infty} c_k z^k\in \H(\D)$, then
\begin{equation*}
\| S_{n_1,n_2}g\|_{H^p}\asymp M_p\left(1-\frac{1}{n_2}, S_{n_1,n_2}g\right).
\end{equation*}
\end{lemma}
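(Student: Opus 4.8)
The plan is to read off both halves of the claimed comparison directly from the chain \eqref{Eq-MaPa}, specializing the radius to the single value $r=1-\frac{1}{n_2}$. No further machinery is needed, so the argument should be short.

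First I would establish the estimate $M_p(1-\frac1{n_2},S_{n_1,n_2}g)\lesssim\|S_{n_1,n_2}g\|_{H^p}$. Applying the right-hand inequality in \eqref{Eq-MaPa} at $r=1-\frac1{n_2}$ gives
\[
M_p\Big(1-\tfrac1{n_2},S_{n_1,n_2}g\Big)\le\Big(1-\tfrac1{n_2}\Big)^{n_1}\|S_{n_1,n_2}g\|_{H^p}\le\|S_{n_1,n_2}g\|_{H^p},
\]
where the last step uses $0<1-\frac1{n_2}<1$ and $n_1\ge1$. Thus this direction holds with comparison constant $1$.

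For the reverse estimate I would use the left-hand inequality in \eqref{Eq-MaPa} at the same radius, namely
\[
\Big(1-\tfrac1{n_2}\Big)^{n_2}\|S_{n_1,n_2}g\|_{H^p}\le M_p\Big(1-\tfrac1{n_2},S_{n_1,n_2}g\Big),
\]
so that
\[
\|S_{n_1,n_2}g\|_{H^p}\le\Big(1-\tfrac1{n_2}\Big)^{-n_2}M_p\Big(1-\tfrac1{n_2},S_{n_1,n_2}g\Big).
\]
It then remains only to bound the factor $(1-\frac1{n_2})^{-n_2}$ by a constant independent of $n_1$ and $n_2$. Since $1\le n_1<n_2$ forces $n_2\ge2$, and since $(1-\frac1n)^n$ increases to $e^{-1}$ as $n\to\infty$, we have $(1-\frac1{n_2})^{n_2}\ge(1-\frac12)^2=\frac14$, whence $(1-\frac1{n_2})^{-n_2}\le4$ for every admissible $n_2$. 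Combining the two displays yields $\|S_{n_1,n_2}g\|_{H^p}\asymp M_p(1-\frac1{n_2},S_{n_1,n_2}g)$ with absolute comparison constants.

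In this argument there is essentially no obstacle: the entire analytic content is already packaged in \eqref{Eq-MaPa}. The only point requiring any care is the uniform bound on $(1-\frac1{n_2})^{-n_2}$, which hinges on the harmless observation that the constraint $n_1<n_2$ rules out $n_2=1$ and so keeps this factor below an absolute constant.
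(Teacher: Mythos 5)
Your proof is correct and is precisely the intended argument: the paper states Lemma~\ref{le:4} without proof immediately after \eqref{Eq-MaPa}, clearly as its direct consequence, and your specialization to $r=1-\frac{1}{n_2}$ together with the uniform bound $\left(1-\frac{1}{n_2}\right)^{-n_2}\le 4$ (valid since $n_1<n_2$ forces $n_2\ge2$ and $\left(1-\frac1n\right)^n$ increases to $e^{-1}$) supplies exactly the missing details, with absolute comparison constants.
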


\medskip

\noindent\emph{Proof of} Theorem~\ref{th:dec1}. (i). By the
M.~Riesz projection theorem and \eqref{Eq-MaPa},
    \begin{equation*}
    \begin{split}
    \|f\|_{H(p,q,\om)}
    &\gtrsim\sum_{n=0}^\infty \|\Delta^{\om,\a}_n f\|_{H^p}^q\int_{r_{n+1}}^{r_{n+2}}
    r^{q{M_{n+1}}}\om(r)\,dr\\
    &\asymp\sum_{n=0}^\infty \|\Delta^{\om,\a}_n f\|_{H^p}^q\int_{r_{n+1}}^{r_{n+2}}\om(r)\,dr
    \asymp\sum_{n=0}^\infty 2^{-n\a}\|\Delta^{\om,\a}_n f\|_{H^p}^q.
    \end{split}
    \end{equation*}
On the other hand, Minkowski's inequality and \eqref{Eq-MaPa} give
    \begin{equation}\label{1}
    M_p(r,f)\le\sum_{n=0}^\infty M_p(r,\Delta^{\om,\a}_nf)\le\sum_{n=0}^\infty
    r^{M_n}\|\Delta^{\om,\a}_n f\|_{H^p},
    \end{equation}
and hence Proposition~\ref{pr:DecompApw} yields
    \begin{equation*}
    \begin{split}
    \|f\|_{H(p,q,\om)}\le\int_0^1 \left( \sum_{n=0}^\infty
    r^{M_n} \|\Delta^{\om,\a}_n f\|_{H^p}\right)^q\om(r)\,dr
    \asymp\sum_{n=0}^\infty 2^{-n\a}\|\Delta^{\om,\a}_n
    f\|_{H^p}^q.
    \end{split}
    \end{equation*}

(ii). Using again the M.~Riesz projection theorem and
\eqref{Eq-MaPa} we deduce
    \[
    \sup_{0<r<1}M_p(r,f)\,\widehat{\om}(r)^\b\gtrsim
    r_{n+1}^{{M_{n+1}}}\|\Delta^{\om,\a}_nf\|_{H^p}2^{-n\a\beta},\quad
    n\in\N\cup\{0\},
    \]
and hence
    $$
    \|f\|_{H(p,\infty,\widehat{\om}^{\b})}\gtrsim\sup_n2^{-n\a\beta}
    \|\Delta^{\om,\a}_nf\|_{H^p}.
    $$
Conversely, assume that $M=\sup_n 2^{-n\a\beta}\|
\Delta^{\om,\a}_n f\|_{H^p}<\infty$. Then \eqref{1} and
Lemma~\ref{le:serie}(i) yield
    \[
    \begin{aligned}
    M_p(r,f)&\le\sum_{n=0}^\infty r^{M_n}\|\Delta^{\om,\a}_n f\|_{H^p}\le M\sum_{n=0}^\infty
    2^{n\a\beta}r^{M_n} \lesssim M
    \widehat{\om}(r)^{-\beta}.
    \end{aligned}
    \]
This finishes the proof. \hfill$\Box$
\par It is worth mentioning that Theorem~\ref{th:dec1} does not remain valid for $0<p\le 1$.
But the part that is true in this case is contained in the next result.
\begin{proposition}\label{le:de1}
Let $0<p\le 1$, $0<\a<\infty$ and $\om\in\DD$ such that
$\int_0^1\om(r)\,dr=1$.
\begin{itemize}
\item[\rm(i)] If $0<q<\infty$, then
    \[
    \|f\|_{H(p,q,\om)}\lesssim \bigg(\sum_{n=0}^\infty 2^{-n\a}
    \|\Delta^{\om,\a}_n f\|_{H^p}^q\bigg)^{1/q},\quad f\in \H(\D).
    \]
\item[\rm(ii)] If $0<\beta<\infty$, then
    \[
    \|f\|_{H(p,\infty,\ho^\b)}\lesssim \sup_n2^{-n\a\beta}\|
    \Delta^{\om,\a}_n f\|_{H^p},\quad f\in \H(\D).
    \]
\end{itemize}
\end{proposition}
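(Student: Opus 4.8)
The plan is to run the upper-bound half of the proof of Theorem~\ref{th:dec1} --- the only half that survives for $0<p\le1$ --- and to replace the two places where $p\ge1$ was used. The first is Minkowski's inequality, which produced the pointwise bound $M_p(r,f)\le\sum_n r^{M_n}\|\Delta^{\om,\a}_nf\|_{H^p}$; for $0<p\le1$ it must be replaced by the elementary inequality $\big|\sum_n\Delta^{\om,\a}_nf\big|^p\le\sum_n|\Delta^{\om,\a}_nf|^p$, which after integration over $\T$ and an appeal to \eqref{Eq-MaPa} yields
\begin{equation}\label{eq:plan-A}
M_p(r,f)^p\le\sum_{n=0}^\infty M_p(r,\Delta^{\om,\a}_nf)^p\le\sum_{n=0}^\infty r^{pM_n}\|\Delta^{\om,\a}_nf\|_{H^p}^p,\quad 0<r<1.
\end{equation}
This is the only step where $p\le1$ enters; everything else is a matter of estimating the power-series type integral on the right-hand side, where the price we pay is that the monomials now sit at the frequencies $pM_n$ rather than at $M_n$.

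For part (ii), set $M=\sup_n2^{-n\a\b}\|\Delta^{\om,\a}_nf\|_{H^p}$ and feed $\|\Delta^{\om,\a}_nf\|_{H^p}\le M2^{n\a\b}$ into \eqref{eq:plan-A}. The resulting sum is $M^p\sum_n2^{n\a\b p}r^{pM_n}=M^p\eta_{\a\b p}(r^p)$, so \eqref{serie} of Lemma~\ref{le:serie} bounds it by $CM^p\widehat{\om}(r^p)^{-\b p}$. It then remains to trade $\widehat{\om}(r^p)$ for $\widehat{\om}(r)$, and here I would record the elementary inequality $p(1-r)\le1-r^p$, valid for $0<p\le1$ and $0\le r<1$, so that $\tfrac{1-r}{1-r^p}\le\tfrac1p$ is bounded; Lemma~\ref{Lemma:replacement-Lemmas-Memoirs}(ii) then gives $\widehat{\om}(r)\lesssim\widehat{\om}(r^p)$, hence $\widehat{\om}(r^p)^{-\b}\lesssim\widehat{\om}(r)^{-\b}$. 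Combining, $M_p(r,f)\widehat{\om}(r)^\b\lesssim M$ uniformly in $r$, which is exactly (ii).

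For part (i) I would isolate the self-contained claim that, for every $0<P<\infty$ and every sequence $c_n\ge0$,
\begin{equation}\label{eq:plan-B}
\int_0^1\Big(\sum_{n=0}^\infty c_nr^{pM_n}\Big)^P\om(r)\,dr\lesssim\sum_{n=0}^\infty2^{-n\a}c_n^P,
\end{equation}
since then (i) follows by raising \eqref{eq:plan-A} to the power $q/p$ and applying \eqref{eq:plan-B} with $P=q/p$ and $c_n=\|\Delta^{\om,\a}_nf\|_{H^p}^p$. Estimate \eqref{eq:plan-B} is proved precisely as the second inequality of Proposition~\ref{pr:DecompApw}, splitting into the case $P\le1$, where subadditivity gives $\big(\sum_nc_nr^{pM_n}\big)^P\le\sum_nc_n^Pr^{pPM_n}$ and one integrates term by term, and the case $P>1$, where H\"older's inequality with $\eta_\gamma(r^p)$ for $0<\gamma<\tfrac{\a}{P-1}$, together with \eqref{serie} and \eqref{momentos}, does the job. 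The two inputs that replace their counterparts in Proposition~\ref{pr:DecompApw} are $\widehat{\om}(r^p)\asymp\widehat{\om}(r)$ (from the previous paragraph, giving $\eta_\gamma(r^p)\lesssim\widehat{\om}(r)^{-\gamma/\a}$) and the moment estimate $\int_0^1r^{pM_n}\om(r)\,dr\asymp\widehat{\om}\big(1-\tfrac1{pM_n}\big)\asymp\widehat{\om}(r_n)=2^{-n\a}$, which follows from Lemma~\ref{Lemma:replacement-Lemmas-Memoirs}(vi); the variant with the extra factor $\widehat{\om}^{-\b}$ needed in the H\"older case comes from applying Lemma~\ref{Lemma:replacement-Lemmas-Memoirs}(vi) to $\widetilde{\om}=\om\,\widehat{\om}^{-\b}\in\DD$, whose tail is $\tfrac{1}{1-\b}\widehat{\om}^{1-\b}$.

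The main obstacle is precisely the frequency shift $M_n\mapsto pM_n$ forced by \eqref{eq:plan-A}. Because $pM_n<M_n$, the point $1-\tfrac1{pM_n}$ lies to the \emph{left} of $r_n$, so the monotonicity of $\widehat{\om}$ controls $\widehat{\om}\big(1-\tfrac1{pM_n}\big)$ from below by $\widehat{\om}(r_n)$ but not from above. The reverse bound $\widehat{\om}\big(1-\tfrac1{pM_n}\big)\lesssim\widehat{\om}(r_n)$ holds only because the ratio $\big(1-(1-\tfrac1{pM_n})\big)/(1-r_n)=\tfrac1{pM_n(1-r_n)}\asymp\tfrac1p$ is bounded away from $0$ and $\infty$, so that Lemma~\ref{Lemma:replacement-Lemmas-Memoirs}(ii) applies with a constant depending on $p$. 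It is thus the doubling hypothesis $\om\in\DD$ that makes the scheme close up, and the constants in \eqref{eq:plan-A}--\eqref{eq:plan-B} inevitably depend on $p$ as well as on $\a$ and $\om$. Finally, I would remark that one cannot hope for the reverse inequalities here: the lower bounds in Theorem~\ref{th:dec1} rest on the M.~Riesz projection theorem, which fails for $p\le1$, and this is exactly why only the one-sided statement of Proposition~\ref{le:de1} survives.
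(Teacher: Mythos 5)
Your proof is correct and takes essentially the same route as the paper, which deduces the proposition from precisely your starting bound $M_p^p(r,f)\le\sum_{n=0}^\infty r^{pM_n}\|\Delta^{\om,\a}_n f\|_{H^p}^p$ (the $p$-subadditivity of $M_p^p$ together with \eqref{Eq-MaPa}), followed by Proposition~\ref{pr:DecompApw} and, for part (ii), Lemma~\ref{le:serie}(i). Your careful treatment of the frequency shift $M_n\mapsto pM_n$ --- via $1-r^p\asymp1-r$, hence $\widehat{\om}(r^p)\asymp\widehat{\om}(r)$ and $\widehat{\om}\left(1-\frac{1}{pM_n}\right)\asymp\widehat{\om}(r_n)$ by Lemma~\ref{Lemma:replacement-Lemmas-Memoirs} --- simply fills in details that the paper's two-line argument leaves implicit.
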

 Proposition \ref{le:de1} follows from  the inequality
\begin{equation*}
    M^p_p(r,f)\le\sum_{n=0}^\infty M^p_p(r,\Delta^{\om}_nf)
    \le\sum_{n=0}^\infty
    r^{pM_n}\|\Delta^{\om}_n f\|^p_{H^p},
    \end{equation*}
\eqref{Eq-MaPa} and Proposition \ref{pr:DecompApw}. See also \cite[Lemma $8$]{PelRatproj}.

\section{Bergman Projection}
\subsection{One weight inequality}
\par The boundedness of projections on $L^p$-spaces   is an intriguing topic  which has  attracted a lot  attention
in recent years \cite{ArPau,CP2,D1,D3,HKZ,PelRatproj,ZeyTams2012,Zhu}. In fact, as far as we know, to characterize those radial weights for which $P_\om:L^p_\om\to L^p_\om$ is bounded, is still an open problem~\cite[p.~116]{D1}.
\par For the class of standard weights, the Bergman projection $P_\a$ (as well as $P^+_\alpha$)
    is bounded on $L^p_\alpha$ if and only if $1<p<\infty$ \cite[Theorem $4.24$]{Zhu}.
    As for $p=\infty$, $P_\a$ is bounded and onto
 from $L^\infty$ to $\mathcal{B}$. Here~$\mathcal{B}$ \cite[Chapter $5$]{Zhu} denotes the Bloch space that consists of $f\in\H(\D)$ such that
    $$
    \|f\|_{\mathcal{B}}=\sup_{z\in\D}|f'(z)|(1-|z|^2)+|f(0)|<\infty.
    $$ These results have  been recently extended to the class of
    regular weights \cite{PelRatproj}.
\begin{theorem}\label{theorem:projections}
Let $1<p<\infty$.
\begin{enumerate}
\item[\rm(i)] If $\om\in\R$, then $P^+_\om:L^p_\om\to L^p_\om$ is bounded. In particular, $P_\om:L^p_\om\to A^p_\om$ is bounded.
\item[\rm(ii)] If $\om\in\R$, then $P_\om:L^\infty(\D)\to \mathcal{B}$ is bounded.
\end{enumerate}
\end{theorem}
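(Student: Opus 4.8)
The plan is to handle both parts through the same circle of ideas: pointwise estimates for the reproducing kernel $B^\om_z$ extracted from the series \eqref{repker} together with the moment asymptotics $\om_{2n+1}\asymp\widehat\om(1-\frac1{2n+1})$ of Lemma~\ref{Lemma:replacement-Lemmas-Memoirs}(vi), fed into a Schur-type argument for (i) and into a direct differentiation estimate for (ii). Throughout, for fixed $z$ I write $s=s_\z$ for the point defined by $1-s_\z=|1-\overline{z}\z|$.

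For (i), observe that $|B^\om_z(\z)|$ depends only on $|z\z|$, so $P^+_\om$ has a nonnegative kernel $|B^\om_z(\z)|\om(\z)$ that is symmetric in $z,\z$ with respect to $dA$. I would apply Schur's test with $h(z)=(1-|z|)^{-a}$, which reduces the boundedness of $P^+_\om$ on $L^p_\om$ to the single family of integral estimates
\[
\int_\D |B^\om_z(\z)|(1-|\z|)^{-t}\om(\z)\,dA(\z)\lesssim (1-|z|)^{-t},
\]
to be checked for $t=ap$ and $t=ap'$; for any fixed $1<p<\infty$ one then selects $a$ so that both exponents fall in the admissible range, which is exactly what the restriction $1<p<\infty$ permits. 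The two inputs are: (a) a pointwise kernel bound, which for $\om\in\R$ I expect in the form $|B^\om_z(\z)|\lesssim\big(\widehat\om(s_\z)\,|1-\overline{z}\z|\big)^{-1}$, obtained by splitting \eqref{repker} at $n\asymp 1/|1-\overline{z}\z|$ and using Lemma~\ref{Lemma:replacement-Lemmas-Memoirs}(vi); and (b) the Forelli--Rudin type estimate
\[
\int_\D\frac{(1-|\z|)^{-t}\,\om(\z)}{\widehat\om(s_\z)\,|1-\overline{z}\z|}\,dA(\z)\lesssim (1-|z|)^{-t},
\]
valid for $\om\in\R$, proved by integrating first in the argument of $\z$ and then in its radius while controlling $\widehat\om$ through the doubling bounds of Lemma~\ref{Lemma:replacement-Lemmas-Memoirs}. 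The boundedness of $P_\om\colon L^p_\om\to A^p_\om$ is then immediate from the pointwise domination $|P_\om f|\le P^+_\om(|f|)$.

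For (ii), differentiating $P_\om f(z)=\sum_n \frac{z^n}{2\om_{2n+1}}\int_\D f(\z)\,\overline{\z}^{\,n}\om(\z)\,dA(\z)$ gives
\[
(P_\om f)'(z)=\int_\D f(\z)\,\overline{\widetilde K(z,\z)}\,\om(\z)\,dA(\z),\qquad \widetilde K(z,\z)=\sum_{n\ge1}\frac{n\,\overline{z}^{\,n-1}\z^{\,n}}{2\om_{2n+1}}.
\]
The extra factor $n$ produces one additional power of $|1-\overline{z}\z|$ in the denominator, so I would prove $|\widetilde K(z,\z)|\lesssim\big(\widehat\om(s_\z)\,|1-\overline{z}\z|^{2}\big)^{-1}$ by the same series splitting. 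Then
\[
|(P_\om f)'(z)|(1-|z|^2)\le \|f\|_{L^\infty(\D)}\,(1-|z|^2)\int_\D\frac{\om(\z)}{\widehat\om(s_\z)\,|1-\overline{z}\z|^{2}}\,dA(\z),
\]
and it remains to show the last integral is $\lesssim (1-|z|^2)^{-1}$ for $\om\in\R$, which is the endpoint case $t=0$ (with one higher power of $|1-\overline{z}\z|$) of the Forelli--Rudin estimate above. Combined with the trivial bound $|P_\om f(0)|\lesssim\|f\|_{L^\infty(\D)}$ coming from $B^\om_0$ being constant, this yields $\|P_\om f\|_{\mathcal{B}}\lesssim\|f\|_{L^\infty(\D)}$.

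The main obstacle in both parts is the same, and is forced by the absence of a closed form for $B^\om_z$ when $\om$ is not standard: one must extract sharp off-diagonal estimates for the kernel and its derivative directly from the moment series, and then establish the Forelli--Rudin integral estimate uniformly over the class $\R$. The delicate point in the latter is passing from the doubling of $\widehat\om$ to pointwise control of $\widehat\om(s_\z)$ under the change of variables adapted to $|1-\overline{z}\z|$; here the two-sided growth bounds of Lemma~\ref{Lemma:replacement-Lemmas-Memoirs}(ii) and (v) do the essential work, while the regularity $\om(r)\asymp\widehat\om(r)/(1-r)$ is precisely what makes the exponent bookkeeping close up across the full range $1<p<\infty$.
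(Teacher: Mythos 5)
The architecture of your argument for (i) --- symmetry of the kernel plus Schur's test with a power test function, reduced to a family of weighted Forelli--Rudin estimates --- is sound and in fact parallels the paper's proof, which also runs Schur's test (with $h=\widehat{\om}^{-1/(pp')}$ instead of $(1-|z|)^{-a}$). But there is a genuine gap at the one step you treat as routine: the pointwise bound $|B^\om_z(\z)|\lesssim\bigl(\widehat{\om}(s_\z)\,|1-\overline{z}\z|\bigr)^{-1}$, and its analogue for the differentiated kernel in (ii). Your proposed derivation --- ``splitting \eqref{repker} at $n\asymp1/|1-\overline{z}\z|$ and using Lemma~\ref{Lemma:replacement-Lemmas-Memoirs}(vi)'' --- cannot work, because taking moduli termwise in the series $\sum_n(\z\overline{z})^n/(2\om_{2n+1})$ loses all dependence on $\arg(\z\overline{z})$: it yields only the diagonal-type bound $\lesssim\bigl(\widehat{\om}(|z\z|)(1-|z\z|)\bigr)^{-1}$, which blows up as $|z\z|\to1$ even when $|1-\overline{z}\z|\asymp1$ (take $\z\overline{z}$ near $-1$), whereas your claimed bound is then $O(1)$. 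Off-diagonal decay comes from cancellation, so one must sum by parts, which requires first- and second-difference estimates for the moment sequence $\{1/\om_{2n+1}\}$; Lemma~\ref{Lemma:replacement-Lemmas-Memoirs}(vi) controls the size of the moments but not the smoothness of the sequence, and sharp pointwise estimates of $B^\om_z$ for non-standard radial weights are exactly what is \emph{not} available --- this is the difficulty the paper emphasizes, and the reason it (following \cite{PelRatproj}) works with the $L^p$-integral estimates of Theorem~\ref{th:kernelstimate}, themselves proved via the decomposition norm Theorem~\ref{th:dec1} rather than via pointwise bounds. Also, a small slip: $|B^\om_z(\z)|$ depends on the complex number $\z\overline{z}$, not only on $|z\z|$; the symmetry needed for Schur's test survives, but the statement as written is false.

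Your proof can be repaired without any pointwise information by quoting Theorem~\ref{th:kernelstimate}(ii) directly, which packages the kernel estimate and the Forelli--Rudin integration in one stroke. For (i), with your test function $h(z)=(1-|z|)^{-a}$ and $v(\z)=(1-|\z|)^{-ap'}\om(\z)$, one checks that $v\in\R$ with $\widehat{v}(t)\asymp(1-t)^{-ap'}\widehat{\om}(t)$ provided $0<ap'<\a$, where $\a=\a(\om)>0$ is the exponent for which $\widehat{\om}(s)(1-s)^{-\a}$ is essentially decreasing (note the admissible range of $a$ depends on $\om$, not only on $p$); then Theorem~\ref{th:kernelstimate}(ii) gives
\begin{equation*}
\int_\D|B^\om_z(\z)|(1-|\z|)^{-ap'}\om(\z)\,dA(\z)
\asymp\int_0^{|z|}\frac{\widehat{v}(t)}{\widehat{\om}(t)(1-t)}\,dt
\asymp\int_0^{|z|}\frac{dt}{(1-t)^{1+ap'}}
\asymp(1-|z|)^{-ap'},
\end{equation*}
and symmetrically with $p$ in place of $p'$, so choosing $a<\a\min(1/p,1/p')$ closes Schur's test; this is essentially the paper's computation \eqref{1111} in different clothing, the paper's choice $h=\widehat{\om}^{-1/(pp')}$ being the more intrinsic one since it makes the bookkeeping automatic via the distortion function. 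For (ii), your differentiation scheme is fine, but instead of the unproven pointwise bound for $\widetilde K$ apply Theorem~\ref{th:kernelstimate}(ii) with $N=1$, $p=1$ and $v=\om$: since $|\widetilde K(z,\z)|=\frac{|\z|}{|z|}|(B^\om_z)'(\z)|$, one gets
\begin{equation*}
(1-|z|)\int_\D|\widetilde K(z,\z)|\,\om(\z)\,dA(\z)
\lesssim(1-|z|)\int_0^{|z|}\frac{\widehat{\om}(t)}{\widehat{\om}(t)(1-t)^{2}}\,dt
\lesssim1,
\end{equation*}
which together with the trivial bound at the origin yields $\|P_\om f\|_{\mathcal{B}}\lesssim\|f\|_{L^\infty(\D)}$ (and, notably, uses only $\om\in\DD$).
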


  In the original source \cite{PelRatproj}, Theorem \ref{theorem:projections} (i) is obtained as  a consequence of Theorem \ref{theorem:projections3} below.
  Here, we shall offer a simple proof of this result. Both arguments
  use strongly  precise
     $L^p$-estimates of the Bergman reproducing kernels \cite{PelRatproj}.
    \begin{theorem}\label{th:kernelstimate}
Let $0<p<\infty$, $\om\in\DD$ and $N\in\N\cup\{0\}$. Then the following assertions hold:
\begin{enumerate}
\item[\rm(i)]$\displaystyle M_p^p\left(r,\left(B^\om_a\right)^{(N)}\right)\asymp
    \int_0^{|a|r}\frac{dt}{\widehat{\om}(t)^{p}(1-t)^{p(N+1)}},\quad r,|a|\to1^-.$
\item[\rm(ii)] If $v\in\DD$, then
    \begin{equation*}\label{k1}
    \|\left(B^\om_a\right)^{(N)}\|^p_{A^p_v}
    \asymp \int_0^{|a|}\frac{\widehat{v}(t)}{\widehat{\om}(t)^{p}(1-t)^{p(N+1)}}\,dt,\quad |a|\to1^-.
    \end{equation*}
\end{enumerate}
\end{theorem}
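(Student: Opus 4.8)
The plan is to read off the Taylor coefficients of $(B^\om_a)^{(N)}$ from the kernel series \eqref{repker}, strip away everything that does not affect $M_p$, and thereby reduce both parts to a single one-variable estimate for a power series with positive, slowly varying coefficients. Differentiating \eqref{repker} $N$ times in $\z$ gives
$$(B^\om_a)^{(N)}(\z)=\sum_{k=0}^\infty\frac{(k+N)!}{k!\,2\om_{2(k+N)+1}}\,\overline{a}^{\,k+N}\z^k,\qquad \om_x=\int_0^1 s^x\om(s)\,ds.$$
Writing $\overline a=|a|e^{-i\arg a}$ and using that $M_p(r,\cdot)$ and the radial integral defining $\|\cdot\|_{A^p_v}$ are invariant under the rotation $\z\mapsto e^{-i\arg a}\z$ and under multiplication by the unimodular constant $e^{-iN\arg a}$, I may replace $(B^\om_a)^{(N)}$ by the function $G(\z)=|a|^N\sum_k c_k(|a|\z)^k$ with positive coefficients $c_k=\frac{(k+N)!}{k!\,2\om_{2(k+N)+1}}$. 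Hence $M_p^p(r,(B^\om_a)^{(N)})=|a|^{Np}M_p^p(|a|r,H)$ with $H(u)=\sum_k c_ku^k$, and by Lemma~\ref{Lemma:replacement-Lemmas-Memoirs}(vi) together with the doubling of $\widehat\om$ one has $c_k\asymp(1+k)^N\widehat\om(1-\tfrac1{k+1})^{-1}$.

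Next I would absorb the polynomial factor $(1+k)^N$ into the weight. Setting $\widehat W(r)=\widehat\om(r)(1-r)^N$ defines a weight $W(r)=\om(r)(1-r)^N+N\widehat\om(r)(1-r)^{N-1}\ge 0$ with $\int_0^1 W=\widehat\om(0)<\infty$ and $\widehat W$ doubling, since $\widehat W(\tfrac{1+r}2)=2^{-N}\widehat\om(\tfrac{1+r}2)(1-r)^N\ge (C2^N)^{-1}\widehat W(r)$; thus $W\in\DD$ and $c_k\asymp\widehat W(1-\tfrac1{k+1})^{-1}$. Because $\widehat\om(t)^p(1-t)^{p(N+1)}=\widehat W(t)^p(1-t)^p$, both the coefficients and the target integral of (i) are now expressed through $W$ alone, so (i) is reduced to the case $N=0$: the pointwise estimate
$$M_p^p(\rho,H)\asymp\int_0^\rho\frac{dt}{\widehat W(t)^p(1-t)^p},\qquad \rho=|a|r\to1^-,$$
for $H(u)=\sum_k c_ku^k$ with $c_k\asymp\widehat W(1-\tfrac1{k+1})^{-1}$ and $W\in\DD$.

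For this core estimate I would use the $W$-adapted partition $\{r_n\}$ from \eqref{rn} (normalising $\int_0^1 W=1$) and the blocks $\Delta_nH(u)=\sum_{k\in I_{W,\a}(n)}c_ku^k$. By doubling, $\widehat W(1-\tfrac1{k+1})\asymp\widehat W(r_n)=2^{-n\a}$ for every $k\in I_{W,\a}(n)$, so on each block the coefficients are comparable to the single value $2^{n\a}$; combined with \eqref{Eq-MaPa} and Lemma~\ref{le:4} this yields a two-sided estimate for $\|\Delta_nH\|_{H^p}$ in terms of $2^{n\a}$ and the $L^p(\T)$-norm of a Dirichlet-type polynomial supported on $I_{W,\a}(n)$. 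The upper bound for $M_p^p(\rho,H)$ then follows from Minkowski's inequality ($p\ge1$) or the $p$-triangle inequality ($0<p\le1$) together with \eqref{Eq-MaPa}, the tail being summed via Lemma~\ref{le:serie}(i); the lower bound is obtained by isolating the blocks with $M_n\lesssim(1-\rho)^{-1}$, i.e. $r_n\lesssim\rho$, and exploiting the monotonicity of $\{c_k\}$ through \eqref{Eq-MaPa}. One finally identifies the resulting block sum with the integral by writing $\int_0^\rho=\sum_n\int_{r_n}^{r_{n+1}}$ and using $\widehat W(t)\asymp 2^{-n\a}$, $1-t\asymp 1-r_n$ on $[r_n,r_{n+1}]$, which mirrors the computation behind Proposition~\ref{pr:DecompApw}.

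Part (ii) I would deduce from (i) by integration: since $\|(B^\om_a)^{(N)}\|_{A^p_v}^p=2\int_0^1 M_p^p(r,(B^\om_a)^{(N)})\,v(r)r\,dr$, inserting (i) and applying Fubini gives
$$\|(B^\om_a)^{(N)}\|_{A^p_v}^p\asymp\int_0^{|a|}\frac{1}{\widehat\om(t)^p(1-t)^{p(N+1)}}\Big(\int_{t/|a|}^1 v(r)r\,dr\Big)\,dt,$$
and since $v\in\DD$ and $|a|\to1^-$ the inner integral is $\asymp\widehat v(t/|a|)\asymp\widehat v(t)$, which is the claim. The main obstacle is the core estimate, namely assembling the individual block norms into the pointwise mean $M_p^p(\rho,H)$: at a fixed radius the blocks do not decouple in $\ell^p$ as cleanly as in the integrated decomposition of Theorem~\ref{th:dec1}, so the lower bound must be extracted with care, and the $L^p(\T)$-size of each Dirichlet block is genuinely $p$-dependent (of order length$^{1-1/p}$ for $p>1$, logarithmic at $p=1$, and bounded for $0<p<1$). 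Verifying that these contributions reassemble into the single integral $\int_0^\rho \widehat W(t)^{-p}(1-t)^{-p}\,dt$ across the three ranges of $p$ is the delicate point; the rest is bookkeeping with the doubling inequalities of Lemma~\ref{Lemma:replacement-Lemmas-Memoirs}.
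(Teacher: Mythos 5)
Your reduction is sound and, for $1<p<\infty$, follows essentially the route that the survey attributes to \cite{PelRatproj}: expand $(B^\om_a)^{(N)}$ via \eqref{repker}, use Lemma~\ref{Lemma:replacement-Lemmas-Memoirs}(vi) and doubling to get $c_k\asymp(1+k)^N\widehat{\om}(1-\frac{1}{k+1})^{-1}$, and run the weight-adapted block machinery behind Theorem~\ref{th:dec1} and Proposition~\ref{pr:DecompApw}. Your device of absorbing $(1+k)^N$ into $W\in\DD$ with $\widehat{W}(r)=\widehat{\om}(r)(1-r)^N$ is correct and is a legitimate substitute for the paper's use of the Littlewood--Paley Theorem~\ref{PavP} to handle derivatives; for $p>1$ the lower bound is in fact easier than you suggest, since the block quantities $2^{n\a p}(M_{n+1}-M_n)^{p-1}$ grow geometrically, so their sum is comparable to the top term and a single-block estimate via the M.~Riesz projection and \eqref{Eq-MaPa} suffices. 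One repair is needed in your Fubini step for (ii): the claim $\widehat{v}(t/|a|)\asymp\widehat{v}(t)$ is false for $t$ near $|a|$ (at $t=|a|$ the left-hand side is $\widehat{v}(1)=0$); you must restrict to $t\le 2|a|-1$, where $1-t/|a|\ge\frac12(1-t)$ makes the comparison legitimate, and then discard $\int_{2|a|-1}^{|a|}$ using the doubling of $\widehat{v}$ and $\widehat{\om}$. That is routine but not optional.

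The genuine gap is the core estimate for $0<p\le 1$, and there it is not merely ``delicate'': the block assembly you propose provably cannot give the theorem. For $p<1$ a block of $M_{n+1}-M_n$ comparable coefficients $\asymp 2^{n\a}$ satisfies $\|\Delta_n H\|_{H^p}\asymp 2^{n\a}$, because the Dirichlet kernel has bounded $H^p$-quasinorm when $p<1$; hence the $p$-triangle inequality only yields $M_p^p(\rho,H)\lesssim\sum_{n:\,r_n\le\rho}2^{n\a p}\asymp\widehat{W}(\rho)^{-p}$, which overshoots the target $\int_0^\rho\widehat{W}(t)^{-p}(1-t)^{-p}\,dt$ by as much as $(1-\rho)^{-(1-p)}$. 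Concretely, for $\om\equiv1$, $N=0$, $p=\frac12$, the kernel is $(1-\overline{a}\z)^{-2}$ and $M_{1/2}^{1/2}(\rho,\cdot)\asymp\log\frac{e}{1-\rho}$, matching the integral, while your block bound gives $(1-\rho)^{-1/2}$; at $p=1$ the loss is the factor $\log\frac{e}{1-\rho}$, since $\|\sum_{k\in I(n)}z^k\|_{H^1}\asymp\log(M_{n+1}-M_n)$ exceeds the integral piece $2^{n\a}\log\frac{1-r_n}{1-r_{n+1}}$. Moreover, your lower-bound mechanism compares $M_p(\rho,\Delta_nH)$ with $M_p(\rho,H)$, which rests on the Riesz projection and is unavailable for $p\le1$; positivity of the coefficients does not rescue it. This is precisely why Theorem~\ref{th:dec1} is stated for $1<p<\infty$ and why the survey warns that it fails for $p\le1$ (Proposition~\ref{le:de1} retains only one inequality). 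What is needed instead, and what is consistent with the ingredients the survey cites, is a fixed-radius analogue of Proposition~\ref{pr:DecompApw} in the spirit of \cite{MatelPav,MatelPavstu} for series whose coefficients are \emph{globally} regular (essentially increasing with $c_k\asymp\widehat{W}(1-\frac1{k+1})^{-1}$, not merely comparable within each block --- for general positive coefficients such a formula fails, as lacunary series show): for instance, the pointwise hump estimate $|H(\rho e^{i\theta})|\lesssim\bigl(\,|1-\rho e^{i\theta}|\,\widehat{W}(1-|1-\rho e^{i\theta}|)\bigr)^{-1}$ gives the upper bound for all $0<p<\infty$ upon integration in $\theta$, and the lower bound for small $p$ follows by interpolating $M_1\le M_p^{1-\theta}M_q^{\theta}$ against the already-proved upper bounds. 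With such a lemma replacing the $H^p$-block bookkeeping, your reduction to $W$ and your derivation of (ii) from (i) go through.
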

\par We would like to mention that Theorem~\ref{PavP} and \cite[Theorem $4$]{PelRathg} play important roles in the proof of  this result.
 Besides, we use strongly Lemma~\ref{Lemma:replacement-Lemmas-Memoirs}, in particular the description of the class $\DD$  in terms of the moments of the weights
$$\int_0^1s^x\om(s)\,ds\asymp\widehat{\om}\left(1-\frac1x\right),\quad x\in[1,\infty).
    $$

Now, we offer a simple proof of the one weight inequality for regular weights.
\par \noindent\emph{Proof of Theorem~\ref{theorem:projections} (i).}
Let $1<p<\infty$ and $\om\in\R$. Let $h=\widehat{\om}^{-\frac{1}{pp'}}$, where $\frac{1}{p}+\frac{1}{p'}=1$.
Since $p>1$, \cite[Lemma~1.4(iii)]{PelRat} shows that
$h^{p'}\om$ is a weight with $\psi_{h^{p'}\om}=\frac{p}{p-1}\psi_{\om}$, and thus $h^{p'}\om\in\R$.
Since $\om\in\DD$,  by Lemma \ref{Lemma:replacement-Lemmas-Memoirs}(ii)  there
exists $\b=\b(\om)$ such that $\widehat{\om}(s)(1-s)^{-\b}$ is essentially increasing on $[0,1)$.
On the other hand, since $\om\in\R$ there
is  $\a=\a(\om)>0$ with $\a\le \b$ such that $\widehat{\om}(s)(1-s)^{-\a}$
is essentially decreasing, see
 \cite[(ii)~p.~10]{PelRat}. By using this and $h^{p'}\om\in\R$ we deduce
    \begin{equation}\label{1111}
    \int_0^r\frac{\widehat{h^{p'}\om}(s)}{\widehat{\om}(s)(1-s)}\,ds
    \asymp\int_0^r\frac{ds}{\widehat{\om}(s)^\frac1p(1-s)}
    \asymp\frac{1}{\widehat{\om}(r)^\frac1p}=h^{p'}(r),\quad r\ge\frac12.
    \end{equation}
By symmetry, a similar reasoning applies when $p'$ is replaced by $p$, and hence we may use Theorem~\ref{th:kernelstimate}(ii) and \eqref{1111} to deduce
    $$
    \int_{\D} |B^\om(z,\z)|h^{p'}(\z)\om(\z)\,dA(\z)\asymp h^{p'}(z),\quad z\in\D,
    $$
and
    $$
    \int_{\D} |B^\om(z,\z)|h^{p}(z)\om(z)\,dA(z)\asymp h^{p}(\z),\quad \z\in\D.
    $$
It follows from  Schur's test~\cite[Theorem~3.6]{Zhu} that  $P^+_\om:L^p_\om\to L^p_\om$ is bounded.
\hfill$\Box$
\medskip\par The situation is different for $\om\in\I$ because then $P^+_\om$ is not bounded on $L^p_\om$ \cite{PelRatproj}.
 This result points out   that many finer function-theoretic properties of  $A^p_\alpha$ do not carry over to
  $A^p_\om$ induced by $\om\in\I$.
\par Concerning rapidly decreasing weights, Dostanic \cite{D1} proved that the Bergman projection is bounded on $L^p_v$ only for $p=2$
in the case of Bergman spaces with the  exponential type weights
$w(r)=(1-r^2)^A\exp\left(\frac{-B}{(1-r^2)^\a}\right)$, $A\in\mathbb{R},\,B,\a>0$. The next result proves that it is
 a general phenomenon which holds for rapidly decreasing and smooth weights \cite{CP2,ZeyTams2012}.
 \begin{proposition}\label{th:1}
Assume that $\om(r)=e^{-2\phi(r)}$ is a radial weight such that $\phi$ is a positive $C^\infty$-function, $\phi'$ is positive on $[0,1)$, $\lim_{r\to 1^-}\phi(r)=\lim_{r\to 1^-}\phi'(r)=+\infty$  and
 \begin{equation}\label{bp4}\lim_{r\to 1^-} \frac{\phi^{ (n)}(r)}{\left(\phi'(r)\right)^n}=0, \quad\text{for any $n\in\N\setminus\{1\}$.
 }\end{equation}
 Then, the Bergman projection  is bounded from  $L^p_\om$ to $L^p_\om$ only for $p=2$.
\end{proposition}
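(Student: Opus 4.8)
The plan is to discard all cancellation and reduce the problem to the radial moments of $\om$ by exploiting rotation invariance, and then to recognise the mode‑by‑mode norm of $P_\om$ as the exponential of a Jensen gap of a convex function, which forces strict blow‑up unless $p=2$. The case $p=2$ is immediate, since $P_\om$ is the orthogonal projection of $L^2_\om$ onto $A^2_\om$ and hence has norm one. For $p\neq2$ I would show that $P_\om$ is already unbounded on a single angular mode. Because $\om$ is radial, $P_\om$ commutes with the rotations $R_\t f(z)=f(e^{i\t}z)$, and the kernel \eqref{repker} depends only on $z\bar\z$; consequently $P_\om$ is diagonal for the angular Fourier decomposition $f(\rho e^{i\t})=\sum_{k\in\mathbb Z}c_k(\rho)e^{ik\t}$. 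Each angular projection $Q_k$ is a contraction on $L^p_\om$ and commutes with $P_\om$, so $\|P_\om\|_{L^p_\om\to L^p_\om}\ge\sup_k\|P_\om|_{\mathrm{ran}\,Q_k}\|$. A direct computation with \eqref{repker} shows that on the mode $k\ge0$ the operator $P_\om$ is the rank‑one map $g(\rho)e^{ik\t}\mapsto \gamma_k^{-1}\big(\int_0^1 g(\rho)\rho^{k+1}\om(\rho)\,d\rho\big)z^k$, where $\gamma_k=\int_0^1\rho^{2k+1}\om(\rho)\,d\rho$. Setting $\beta_{k,q}=\int_0^1\rho^{kq+1}\om(\rho)\,d\rho$ and computing the norm of this rank‑one operator on $L^p$ of the measure $\rho\om(\rho)\,d\rho$ by duality gives the exact value
\[
\|P_\om|_{\mathrm{ran}\,Q_k}\|=\frac{\beta_{k,p}^{1/p}\,\beta_{k,p'}^{1/p'}}{\gamma_k},
\]
so it suffices to prove that this quantity is unbounded in $k$ when $p\neq2$.

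The structural observation that drives the rest is that $G(s):=\log\int_0^1\rho^{s+1}\om(\rho)\,d\rho$ is convex: it is the logarithm of the Laplace transform (a cumulant generating function) of the finite measure $\rho\om(\rho)\,d\rho$ under the substitution $X=\log\rho$, and such functions are convex by H\"older's inequality. Thus $\beta_{k,q}=e^{G(kq)}$ and $\gamma_k=e^{G(2k)}$, and since $\tfrac1p(kp)+\tfrac1{p'}(kp')=2k$,
\[
\log\|P_\om|_{\mathrm{ran}\,Q_k}\|=\tfrac1pG(kp)+\tfrac1{p'}G(kp')-G(2k)\ge0
\]
is precisely the Jensen gap of $G$ at the points $kp,kp'$ with weights $\tfrac1p,\tfrac1{p'}$. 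It is nonnegative by convexity, vanishes when $p=2$ (the two points coincide), and the whole problem reduces to showing that this gap tends to $\infty$ as $k\to\infty$ whenever $p\neq2$, i.e.\ that $G$ is convex quantitatively enough on the scale $s\asymp k$.

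To quantify the gap I would compute the asymptotics of $G$ by Laplace's method. The maximum of $h_s(\rho)=(s+1)\log\rho-2\phi(\rho)$ is attained at $\rho_s$ determined by $2\rho_s\phi'(\rho_s)=s+1$, which runs to $1^-$ since $\phi'\to\infty$. The hypothesis \eqref{bp4} is exactly what legitimises the saddle‑point expansion: the smallness of $\phi^{(n)}/(\phi')^n$ forces all higher‑order terms at $\rho_s$ to be negligible against the Gaussian term, so that
\[
G(s)=(s+1)\log\rho_s-2\phi(\rho_s)-\tfrac12\log\big|h_s''(\rho_s)\big|+O(1),
\]
with $|h_s''(\rho_s)|\asymp s+\rho_s\phi''(\rho_s)$. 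The envelope theorem gives $G'(s)=\log\rho_s+o(1)$ and, for the leading part, $G''(s)\asymp \rho_s'/\rho_s\asymp\big(\phi'(\rho_s)+\rho_s\phi''(\rho_s)\big)^{-1}$. Inserting these into the integral (Bregman) form of the Jensen gap shows that the leading‑term contribution is comparable to $k^2G''(2k)$, whereas the logarithmic prefactor and the $O(1)$ error contribute only $O(\log k)$. Since $\phi$ grows fast, $k^2G''(2k)\to\infty$ (for the model $\phi(r)=c(1-r)^{-\alpha}$ one computes $k^2G''(2k)\asymp k^{\alpha/(\alpha+1)}$), so the leading part dominates, the gap diverges, and therefore $\|P_\om|_{\mathrm{ran}\,Q_k}\|\to\infty$, giving the unboundedness of $P_\om$ on $L^p_\om$ for every $p\neq2$.

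The hard part is this last step: upgrading the heuristic Laplace expansion to rigorous two‑sided estimates for $\beta_{k,q}$ that remain sharp after being combined in the ratio $\beta_{k,p}^{1/p}\beta_{k,p'}^{1/p'}/\gamma_k$, and then proving that the resulting Jensen gap genuinely \emph{grows} rather than merely being nonnegative. Both tasks rest entirely on the regularity of $\phi$: the monotonicity and blow‑up of $\phi,\phi'$ fix the location and scale of the saddle point $\rho_s$, while the full family of bounds \eqref{bp4} controls the error terms in Laplace's method and pins down the order of $G''$, the very quantity whose decay must be slow enough to keep $k^2G''(2k)$ unbounded.
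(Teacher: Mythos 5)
The survey itself contains no proof of Proposition~\ref{th:1}; the result is quoted from \cite{CP2,ZeyTams2012}, and the proofs there run along exactly the route you propose, which goes back to Dostanic \cite{D1}: test $P_\om$ on a single angular mode, reduce to a ratio of moments, and defeat that ratio by Laplace-method asymptotics. Your first half is correct and complete. Since the kernel \eqref{repker} depends only on $z\bar{\zeta}$, $P_\om$ acts on $g(\rho)e^{ik\theta}$ as the rank-one map you compute, and $L^p$--$L^{p'}$ duality for the measure $\rho\,\om(\rho)\,d\rho$ gives exactly $\|P_\om|_{\mathrm{ran}\,Q_k}\|=\beta_{k,p}^{1/p}\beta_{k,p'}^{1/p'}/\gamma_k$; note that you do not even need $Q_k$ to be a contraction, since testing on one mode already gives the lower bound for $\|P_\om\|_{L^p_\om\to L^p_\om}$. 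The reformulation as the Jensen gap of the convex function $G(s)=\log\int_0^1\rho^{s+1}\om(\rho)\,d\rho$ is a tidy repackaging of what \cite{CP2,ZeyTams2012} do with the Legendre--Fenchel transform, and it correctly isolates $p=2$ as the only exponent where the gap degenerates.

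The genuine gap is in your final step, and it is slightly worse than an unfinished Laplace expansion: your error bookkeeping is too coarse for the stated hypotheses. Differentiating the saddle equation $2\rho_s\phi'(\rho_s)=s+1$ gives, at $\rho=\rho_{2k}$, the estimate $k^2G''(2k)\asymp\bigl(1/\phi'(\rho)+\phi''(\rho)/\phi'(\rho)^2\bigr)^{-1}$, so divergence of your main term is precisely the $n=2$ case of \eqref{bp4} combined with $\phi'\to\infty$ --- a point worth making explicit, since it shows exactly where the hypotheses bite. But \eqref{bp4} permits $\phi''/(\phi')^2\to0$ arbitrarily slowly, in which case $k^2G''(2k)$ may grow more slowly than $\log k$ (for instance on the scale $\log\log\log k$); an $O(\log k)$ error then swamps the main term, and your claim that the leading part dominates does not follow as written. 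What rescues the argument is that the corrections need not be estimated term by term: the saddle-point prefactors at the three mutually comparable indices $kp$, $kp'$, $2k$ enter the gap with weights $\tfrac1p+\tfrac1{p'}-1=0$, so once one shows they are pairwise comparable --- which requires uniform two-sided moment asymptotics of the kind provided by Kriete \cite{Kriete2003}, the very tool the survey invokes for Proposition~\ref{pr:intmean} --- they contribute only $O(1)$ to the gap. That uniformity is the ingredient you must import or reprove. Finally, replace the assertion that the gap is ``comparable to $k^2G''(2k)$'' by a one-sided Bregman bound involving $G''$ over the whole interval between $kp'$ and $kp$, since nothing a priori prevents $G''$ from dipping away from the single point $2k$.
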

\par Consequently, if  $\om$ a rapidly decreasing weight, it is natural to look for a subs\-ti\-tute  for the boundedness of the Bergman projection $P_\om$.
Inspired by the Fock space setting, the following result has been proved
for a canonical example \cite{CP2}.
\begin{theorem}\label{th:CP2}
Let $\om(r)=\exp\left(-\frac{\a}{1-r}\right)$,\,$\a>0$, and $1\le p <\infty$. Then, the Bergman projection $P_\om$
is bounded from $L^p_{\om^{p/2}}$ to $A^p_{\om^{p/2}}$.
\end{theorem}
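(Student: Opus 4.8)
The plan is to prove the boundedness by a weighted conjugation that turns the problem into an \emph{unweighted} Schur test, and then to reduce that test to a single pointwise estimate for the reproducing kernel $B^\om_z$. First I would record that the multiplication map $U\colon f\mapsto \om^{1/2}f$ is an isometric isomorphism of $L^p_{\om^{p/2}}$ onto $L^p(dA)$, since $\|f\|_{L^p_{\om^{p/2}}}^p=\int_\D|f\,\om^{1/2}|^p\,dA$. Hence $P_\om$ is bounded from $L^p_{\om^{p/2}}$ to $A^p_{\om^{p/2}}$ if and only if the conjugated operator $T=UP_\om U^{-1}$ is bounded on $L^p(dA)$. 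A direct computation from the definition of $P_\om$ shows that $T$ is the integral operator on $L^p(dA)$ with kernel
\[
\mathcal K(z,\z)=\overline{B^\om_z(\z)}\,\om(z)^{1/2}\om(\z)^{1/2},
\]
so it suffices to treat the positive operator $T^{+}$ with kernel $|\mathcal K(z,\z)|=|B^\om_z(\z)|\,\om(z)^{1/2}\om(\z)^{1/2}$.

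Because $B^\om_z(\z)=\overline{B^\om_\z(z)}$ by \eqref{repker}, the kernel $|\mathcal K|$ is symmetric in $z$ and $\z$. Therefore Schur's test applied with the constant test function $h\equiv1$ collapses both of its hypotheses to one and the same condition, and reduces the entire range $1\le p<\infty$ at once to the single uniform estimate
\begin{equation*}
\sup_{z\in\D}\int_\D |B^\om_z(\z)|\,\om(z)^{1/2}\om(\z)^{1/2}\,dA(\z)<\infty. \tag{$\star$}
\end{equation*}
(The case $p=2$ is already granted by the orthogonality of $P_\om$ and serves as a consistency check.) Thus everything is funnelled into proving $(\star)$.

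The heart of the matter, and the step I expect to be the main obstacle, is the pointwise control of $B^\om_z$ required for $(\star)$. Writing $\om=e^{-2\phi}$ with $\phi(z)=\tfrac{\alpha}{2(1-|z|)}$, the natural length scale is $\tau(z)=(\Delta\phi(z))^{-1/2}\asymp(1-|z|)^{3/2}$, and I would aim at two estimates. The first is the on–diagonal size $B^\om_z(z)\,\om(z)\asymp \tau(z)^{-2}$, obtained from the series \eqref{repker} together with a Laplace/saddle–point analysis of the moments $\int_0^1 r^{2n+1}\om(r)\,dr$, whose maximiser sits at $1-r\asymp n^{-1/2}$. The second, and genuinely hard, input is the off–diagonal decay
\[
|B^\om_z(\z)|\,\om(z)^{1/2}\om(\z)^{1/2}\lesssim \frac{1}{\tau(z)\tau(\z)}\,e^{-\varepsilon\, d_\tau(z,\z)},
\]
where $d_\tau$ is the distance in the metric $|dz|/\tau(z)$ and $\varepsilon>0$. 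This decay estimate is established through Hörmander $\bar\partial$–estimates for the weight $e^{-2\phi}$ in the spirit of Marco–Massaneda–Ortega-Cerdà, and this is precisely where the smoothness and regularity of $\phi$ encoded in \eqref{bp4} are used.

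Granting these two bounds, $(\star)$ follows by routine bookkeeping. One splits the integral over the bulk region $\{d_\tau(z,\z)\le1\}$, which has area $\asymp\tau(z)^2$ and on which the integrand is $\asymp\tau(z)^{-2}$, so this part contributes $O(1)$ uniformly; and over the dyadic shells $\{2^{k}<d_\tau(z,\z)\le2^{k+1}\}$, on which the polynomial growth of the area is dominated by the factor $e^{-\varepsilon d_\tau}$, so the tail sums to a constant independent of $z$. This yields $(\star)$ and hence, via the Schur test above, the boundedness of $P_\om$ from $L^p_{\om^{p/2}}$ to $A^p_{\om^{p/2}}$ for every $1\le p<\infty$.
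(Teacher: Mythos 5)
Your proposal is correct in outline, but it takes a genuinely different route from the one the paper sketches for Theorem~\ref{th:CP2}. The opening reduction is sound: $f\mapsto\om^{1/2}f$ is an isometry of $L^p_{\om^{p/2}}$ onto $L^p(dA)$, the conjugated kernel $|B^\om_z(\z)|\,\om(z)^{1/2}\om(\z)^{1/2}$ is symmetric, and Schur's test with $h\equiv1$ reduces the whole range to your condition $(\star)$ (at $p=1$ replace Schur by Fubini on the column sums, which by symmetry are again $(\star)$; equivalently $(\star)$ gives $L^1$- and $L^\infty$-boundedness and one interpolates). Where you diverge is in the verification of $(\star)$. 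The proof of \cite{CP2} outlined in the paper never needs pointwise off-diagonal kernel estimates: exploiting radiality, it establishes the integral means estimate $M_1(r,K)\asymp\exp\bigl(\frac{\a}{1-\sqrt{r}}\bigr)(1-r)^{-3/2}$ of Proposition~\ref{pr:intmean}, via Kriete's Legendre--Fenchel asymptotics for the moments \cite{Kriete2003} and Hadamard products of $K_r$ with smooth polynomials, and then $(\star)$ follows from a one-variable computation in polar coordinates; the exponential cancellation rests on $\frac{1}{1-\sqrt{rs}}\le\frac{1}{2(1-r)}+\frac{1}{2(1-s)}$, with a Gaussian-type gain of order $\exp\bigl(-c\,\frac{((1-r)-(1-s))^2}{(1-r)^3}\bigr)$ near the diagonal absorbing the singularity $(1-rs)^{-3/2}$. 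Your route through pointwise diagonal size $\tau^{-2}$ and off-diagonal decay in the $d_\tau$-metric obtained from H\"ormander--Berndtsson $\bar\partial$-estimates is precisely the alternative approach of \cite[Theorem~4.1]{ArPau} that the paper explicitly mentions: it buys generality (non-radial weights) at the cost of much heavier machinery, whereas given Proposition~\ref{pr:intmean} the radial verification of $(\star)$ is only a few lines.

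Two caveats on your ``genuinely hard input.'' First, the exponential decay $e^{-\varepsilon d_\tau(z,\z)}$ is stronger than what the disc literature provides off the shelf: \cite{ArPau} prove, for each $M\ge1$, the decay $\bigl(\min(\tau(z),\tau(\z))/|z-\z|\bigr)^{M}$, and this polynomial decay already suffices for $(\star)$ with your dyadic bookkeeping, since $d_\tau$-shells have at most polynomially growing measure relative to $\tau(z)^2$; the genuinely exponential estimates of Christ and Marco--Massaneda--Ortega-Cerd\`a are proved in the Fock setting and would have to be transported to the disc rather than merely cited. Second, \eqref{bp4} is the hypothesis of the unboundedness statement Proposition~\ref{th:1}, not of Theorem~\ref{th:CP2}; what your $\bar\partial$-argument actually requires is the regularity of the concrete $\phi(r)=\frac{\a}{2(1-r)}$, which should be checked directly (it does hold). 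With these repairs your argument closes, but it is the \cite{ArPau} proof rather than the \cite{CP2} proof the paper describes.
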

The approach to prove this result relies on an instance of Schur's test and accurate  estimates for  the integral means of  order one of the
corresponding Bergman reproducing kernel \cite[Proposition $5$]{CP2}.
\begin{proposition}\label{pr:intmean}
Let $\om(r)=\exp\left(-\frac{\a}{1-r}\right)$,\,\,$\a>0$, and let $K(z)=\sum_{n=0}^\infty \frac{z^n}{2\om_{2n+1}}$. Then, there is a positive constant
$C$ such that
$$M_1(r,K)\asymp \frac{\exp\left(\frac{\a}{1-\sqrt{r}}\right)}{(1-r)^{\frac32}},\quad r\to 1^-.$$
\end{proposition}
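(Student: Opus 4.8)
The plan is to pass from the kernel to its Taylor coefficients and reduce the whole estimate to the size of the largest coefficient. First I would determine the asymptotic behaviour of the moments $\om_{2n+1}=\int_0^1 s^{2n+1}e^{-\a/(1-s)}\,ds$ by Laplace's method: writing the integrand as $\exp\big((2n+1)\log s-\tfrac{\a}{1-s}\big)$, the phase has a unique interior maximum $s_n$ determined by $(2n+1)(1-s_n)^2=\a s_n$, so that $1-s_n\asymp\sqrt{\a/n}$, and a Taylor expansion of the phase at $s_n$ gives phase value $-2\sqrt{2\a n}+O(1)$ and a second derivative of order $n^{3/2}$. The Gaussian approximation then yields $\om_{2n+1}\asymp n^{-3/4}e^{-2\sqrt{2\a n}}$, hence the coefficients $c_n:=\tfrac{1}{2\om_{2n+1}}\asymp n^{3/4}e^{2\sqrt{2\a n}}$, which are all positive. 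Note that the general moment description of Lemma~\ref{Lemma:replacement-Lemmas-Memoirs}(vi) is unavailable here, since this rapidly decreasing $\om$ is not in $\DD$, so the computation must be done by hand.

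Next I would isolate the maximal term. Since $c_n\ge0$, extracting the $n$-th Fourier coefficient gives, for every $n$,
\[
c_nr^n=\frac{1}{2\pi}\int_0^{2\pi}K(re^{it})e^{-int}\,dt\le \frac{1}{2\pi}\int_0^{2\pi}|K(re^{it})|\,dt=M_1(r,K),
\]
so $M_1(r,K)\gtrsim B:=\max_{n\ge0}c_nr^n$. Optimising $\log(c_nr^n)\approx\tfrac34\log n+2\sqrt{2\a n}+n\log r$ over $n$ (equivalently over $x=\sqrt{2n}$, which turns the exponent into $2\sqrt{\a}\,x+\tfrac{x^2}{2}\log r$) gives a maximiser $n^*\asymp(1-r)^{-2}$ and, using $\log\tfrac1r\approx2(1-\sqrt r)$ as $r\to1^-$, the value $B\asymp(1-r)^{-3/2}e^{\a/(1-\sqrt r)}$. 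Thus $B$ is precisely the asserted quantity, and the proposition reduces to the matching upper bound $M_1(r,K)\lesssim B$.

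For the upper bound I would exploit that $b_n:=c_nr^n$ is an essentially log-concave, bell-shaped sequence peaked at $n^*$, with natural width $\sigma\asymp(1-r)^{-3/2}$ (the reciprocal square root of $|(\log b)''(n^*)|$, the leading term being $\tfrac{\sqrt{2\a}}{2}(n^*)^{-3/2}$). The idea is a weighted Cauchy–Schwarz estimate: with $u(t)=K(re^{it})=\sum_nb_ne^{int}$,
\[
M_1(r,K)^2\lesssim\Big(\int_{-\pi}^{\pi}|u(t)|^2(1+\sigma^2t^2)\,dt\Big)\Big(\int_{-\pi}^{\pi}\frac{dt}{1+\sigma^2t^2}\Big),
\]
where the second factor is $\asymp\sigma^{-1}$. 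Using $|e^{it}-1|\asymp|t|$ on $[-\pi,\pi]$ together with Parseval, the first factor is comparable to $\sum_nb_n^2+\sigma^2\sum_n(b_n-b_{n-1})^2$. The bell shape forces $\sum_nb_n^2\asymp B^2\sigma$ and $\sum_n(b_n-b_{n-1})^2\asymp B^2\sigma^{-1}$ (the latter because $b_n-b_{n-1}\approx b_n(\log b)'(n)$ with $(\log b)'(n)\asymp(n-n^*)\sigma^{-2}$), so the first factor is $\asymp B^2\sigma$ and the product is $\asymp B^2$. This yields $M_1(r,K)\lesssim B$ and closes the estimate.

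The main obstacle is turning the bell-shape heuristic into rigorous, uniform bounds. Concretely, one must upgrade the Laplace approximation of $\om_{2n+1}$ to an asymptotic with a controlled remainder that is uniform in $n$, so that $\log b_n$ is, up to uniformly bounded errors, a concave parabola near $n^*$; only then can the two $\ell^2$-sums $\sum_nb_n^2$ and $\sum_n(b_n-b_{n-1})^2$, including their tails away from the peak, be pinned down to the stated orders $B^2\sigma$ and $B^2\sigma^{-1}$. Quantifying the width $\sigma\asymp(1-r)^{-3/2}$ and verifying that these discrete sums match the corresponding Gaussian integrals is where the real work lies; the reduction steps above are comparatively soft.
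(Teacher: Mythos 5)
Your route is genuinely different from the one the paper reports on: in \cite{CP2} the moment asymptotics are imported ready-made from Kriete's Legendre--Fenchel machinery \cite{Kriete2003}, and the upper bound for $M_1(r,K)$ is obtained by majorizing it by the $\ell^1$-sum of the $H^1$-norms of Hadamard products of $K_r$ with certain smooth polynomial blocks, a device that never differences individual Taylor coefficients. The soft parts of your scheme are fine: the Laplace computation gives the correct orders $\om_{2n+1}\asymp n^{-3/4}e^{-2\sqrt{2\a n}}$ (and you are right that Lemma~\ref{Lemma:replacement-Lemmas-Memoirs}(vi) is unavailable here, since this $\om\notin\DD$); the coefficient-extraction lower bound $M_1(r,K)\ge c_nr^n$ is correct; and the exponent bookkeeping works, since $\frac{2\a}{\log(1/r)}=\frac{\a}{1-\sqrt{r}}-\frac{\a}{2}+O(1-r)$, so the peak value $B$ is indeed comparable to $(1-r)^{-3/2}e^{\a/(1-\sqrt{r})}$.

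The genuine gap is in the upper bound, and it is sharper than the uniformity issue you flag: the condition you declare sufficient --- that $\log b_n$ be a concave parabola \emph{up to uniformly bounded errors} --- is not sufficient. Write $b_n=e^{g(n)}(1+\e_n)$ with $g$ the parabola. If the $\e_n$ are merely bounded (or even $o(1)$) but rough in $n$, then $|b_n-b_{n-1}|$ can be of size $|\e_n|\,b_n$, so $\sum_n(b_n-b_{n-1})^2$ can be as large as $\sum_n b_n^2\asymp B^2\sigma$; your first Cauchy--Schwarz factor then becomes $\asymp B^2\sigma^3$ and the method returns only $M_1(r,K)\lesssim B\sigma$, which is the trivial bound $\sum_n b_n$. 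What the argument actually needs is ratio control $b_{n-1}/b_n=1+O\left(|n-n^*|\sigma^{-2}+\sigma^{-1}\right)$, i.e.\ the consecutive-moment ratio $\om_{2n+1}/\om_{2n-1}$ to relative precision $O(n^{-3/4})$ --- well beyond a saddle-point estimate with multiplicative $O(1)$, or even $1+o(1)$, error. The gap is repairable within your framework, but by a different mechanism than the one you propose: by H\"older's inequality $x\mapsto\om_x$ is log-convex, so $b_n=r^n/(2\om_{2n+1})$ is \emph{exactly} log-concave and unimodal (hence $\sum_n|b_n-b_{n-1}|\le 2B$ with differences of constant sign on each side of the peak), and the monotonicity of $\frac{d}{dx}\log\om_x$ lets you self-improve your two-sided bound on $\om_x$, by comparing increments over windows of length $x^{3/4}$, to $\frac{d}{dx}\log\om_x=-\sqrt{2\a/x}+O(x^{-3/4})$, which is exactly the required precision. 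Alternatively one can sidestep coefficientwise differencing altogether by grouping the coefficients into smooth blocks, as is done in \cite{CP2}.
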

 These estimates are obtained by using two key tools;  the sharp asymptotic estimates obtained in \cite{Kriete2003} for the moments
of the weight  in terms of the Legendre-Fenchel transform, and an upper estimate of $M_1(r,K)$
 by the $l^1$-norm of the $H^1$-norms of the Hadamard product of $K_r$ with certain smooth polynomials.
\par Finally, we mention  that a generalization  of Theorem \ref{th:1} for a class of not necessarily radial weights
has been achieved in \cite[Theorem $4.1$]{ArPau}. Their approach is different from that of \cite{CP2}, it uses
ideas from \cite{MarOrtJGA09}
and H\"{o}rmander-Berndtsson $L^2$-estimates for solutions of the $\bar{\partial}$-equation \cite{Berndtsoon01,Hormander90}.
\par We refer to \cite{D3,ZeyTurk13} for other results concerning the particular case $\om=v$ in \eqref{twoweight}.
\subsection{Two weight inequality}
 As it has been commented before, the weights $v$ satisfying \eqref{twoweight} when $\om$ is an standard weight and $1<p<\infty$,
 were characterized by Bekoll\'e and Bonami \cite{BB,B}. Recently \cite{PottRegueraJFA13}, it has been proved the following quantitative version of this result
 $$\|P^+_\a (f)\|_{L^p_{v(1-|z|^2)^\a}}\le C(p,\a)B_{p,\alpha}\left(v\right)\|f\|_{L^p_{v(1-|z|^2)^\a}},$$
 where $B_{p,\alpha}(v)$ was defined in \eqref{eq:BB}.
  With regard to the case $p=1$, we define  the weighted maximal function
    $$
    M_{\alpha}(\om)(z)=\sup_{z\in S(a)}\frac{\om(S(a))}{A_\a\left(S(a)
    \right)},\quad
    z\in\D.
    $$
\par It is known \cite{B,BekCan86} that
 the  weak $(1,1)$ inequality
    holds (and its analogue replacing $P_\a$ by $P^+_\a$)
     \begin{equation*}\label{weak11}
   \om\left(\left\{z\in\D: |P_\a(f)(z)|>\lambda\right\}\right)\le C_{\alpha,\om} \frac{||f||_{L^1_\om}}{\lambda}
    \end{equation*}
if and only if the weighted maximal function satisfies
$$M_{\alpha}(\om)(z)\le C\frac{\om(z)}{(1-|z|)^\a},\quad z\in\D.$$
\medskip\par
As far as we know, apart from Bekoll\'e-Bonami's results \cite{BB,B} on the standard Bergman projection $P_\a$ very little is known
about \eqref{twoweight} when $\om\neq v$. We note that \cite[Theorem $4.1$]{ArPau} or \cite[Theorem $1$]{CP2} may be seen as
 positive examples for \eqref{twoweight} in the context of rapidly increasing weights.
A recent result \cite{PelRatproj} describes  those regular weights $\om$ and $v$ for which \eqref{twoweight} holds for $1<p<\infty$.

\begin{theorem}\label{theorem:projections3}
Let $1<p<\infty$ and $\om,v\in\R$. Then the following conditions are equivalent:
\begin{itemize}
\item[\rm(a)] $P^+_\om:L^p_v\to L^p_v$ is bounded;
\item[\rm(b)] $P_\om:L^p_v\to L^p_v$ is bounded;
\item[\rm(c)]  $\displaystyle
\sup_{0<r<1} \frac{\widehat{v}(r)^{\frac{1}{p}}
    \left(\int_r^1\left(\frac{\om(s)}{v(s)}\right)^{p'}\,v(s)ds\right)^{\frac{1}{p'}}
    }{\widehat\om(r)}<\infty$;
\item[\rm(d)] $\displaystyle \sup_{0<r<1}\frac{\om(r)^p(1-r)^{p-1}}{v(r)}\int_0^r\frac{v(s)}{\om(s)^p(1-s)^p}\,ds<\infty$;
\item[\rm(e)] $\displaystyle \sup_{0<r<1}\left(\int_0^r\frac{v(s)}{\om(s)^p(1-s)^p}\,ds\right)^{\frac{1}{p}}
    \left(\int_r^1\left(\frac{\om(s)}{v(s)}\right)^{p'}\,v(s)ds\right)^{\frac{1}{p'}}<\infty;$
\item[\rm(f)] $\displaystyle
\sup_{0<r<1} \frac{\widehat{v}(r)^{\frac{1}{p}}
    \int_r^1\frac{\om(s)}{\left((1-s)v(s)\right)^{1/p}}\,ds
    }{\widehat\om(r)}<\infty$;
\item[\rm(g)]$\displaystyle \sup_{0<r<1}\frac{\om(r)(1-r)^{\frac{1}{p'}}}{v(r)^{1/p}}\int_0^r\frac{v(s)^\frac{1}{p}}{\om(s)(1-s)^{1+\frac{1}{p'}}}\,ds<\infty$.
\end{itemize}
\end{theorem}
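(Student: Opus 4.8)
The plan is to prove the trivial implication \textrm{(a)}$\Rightarrow$\textrm{(b)}, the pairwise equivalence of the analytic conditions \textrm{(c)}--\textrm{(g)}, and then to close the loop with the two genuinely operator-theoretic steps \textrm{(e)}$\Rightarrow$\textrm{(a)} (sufficiency) and \textrm{(b)}$\Rightarrow$\textrm{(c)} (necessity). The pointwise bound $|P_\om f(z)|\le P^+_\om(|f|)(z)$ gives \textrm{(a)}$\Rightarrow$\textrm{(b)} at once. The chain \textrm{(c)}$\Leftrightarrow$\textrm{(d)}$\Leftrightarrow$\textrm{(e)}$\Leftrightarrow$\textrm{(f)}$\Leftrightarrow$\textrm{(g)} contains no operator theory: since $\om,v\in\R$ we have $\om(r)\asymp\widehat{\om}(r)/(1-r)$ and $v(r)\asymp\widehat{v}(r)/(1-r)$, and by Lemma~\ref{Lemma:replacement-Lemmas-Memoirs}(ii) together with \cite{PelRat} there are $0<\a\le\b$ for which $\widehat{\om}(r)(1-r)^{-\b}$ is essentially increasing and $\widehat{\om}(r)(1-r)^{-\a}$ essentially decreasing (and likewise for $v$). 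These monotonicities allow one to replace each weight by its integrated average inside the integrals of \textrm{(c)}--\textrm{(g)} and to evaluate the resulting one-variable integrals via $\int_0^r\frac{g(s)}{1-s}\,ds\asymp g(r)$; carrying this out transforms any one of \textrm{(c)}--\textrm{(g)} into any other. This part is routine, the only work being the bookkeeping of exponents.

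The structural heart of the matter is that \textrm{(e)} has the Muckenhoupt--Hardy shape $\big(\int_0^r\big)^{1/p}\big(\int_r^1\big)^{1/p'}$, and this is no accident. The $L^1$-mean estimate of Theorem~\ref{th:kernelstimate}(i) with $N=0$ gives, for $s=|\z|$,
\begin{equation*}
M_1(s,B^\om_z)\asymp\int_0^{|z|s}\frac{dt}{\widehat{\om}(t)(1-t)}\asymp\widehat{\om}(|z|s)^{-1}\asymp\widehat{\om}(\min(|z|,s))^{-1},
\end{equation*}
where the last step uses $1-|z|s\asymp1-\min(|z|,s)$ and the doubling of $\widehat{\om}$. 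Thus integrating the modulus of the kernel against a radial weight essentially turns $P^+_\om$ into the one-dimensional operator $g\mapsto\int_0^r\frac{g(s)\om(s)}{\widehat{\om}(s)}\,ds+\widehat{\om}(r)^{-1}\int_r^1 g(s)\om(s)\,ds$, a Hardy operator plus its dual, whose two-weight boundedness is governed precisely by the Muckenhoupt condition \textrm{(e)}. This explains why \textrm{(e)} is the right answer and guides the choice of test function below.

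For sufficiency \textrm{(e)}$\Rightarrow$\textrm{(a)} I would run Schur's test exactly as in the proof of Theorem~\ref{theorem:projections}(i), rather than appeal to the one-dimensional theory directly (which only sees radial $f$). The task is to produce a positive radial $h$ with
\begin{equation*}
\int_\D|B^\om(z,\z)|\,h(\z)^{p'}\om(\z)\,dA(\z)\lesssim h(z)^{p'}\quad\text{and}\quad\int_\D|B^\om(z,\z)|\,h(z)^{p}v(z)\,dA(z)\lesssim\frac{v(\z)}{\om(\z)}h(\z)^{p}.
\end{equation*}
The natural candidate generalizes the one-weight choice $\widehat{\om}^{-1/(pp')}$ by inserting the competing factor $\widehat{v}$; the angular integration is then performed by the mean estimate above, and the radial integrals are evaluated with the essential monotonicities of $\widehat{\om},\widehat{v}$, so that the two displayed inequalities reduce to the $\int_0^r$ and $\int_r^1$ factors of \textrm{(e)}, respectively. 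I expect \emph{this} to be the main obstacle: since it is the \emph{product} of the two contributions, and not either alone, that \textrm{(e)} controls, one must track the balance between $\widehat{\om}$ and $\widehat{v}$ with care to see that \textrm{(e)} is exactly what makes both Schur inequalities hold.

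Finally, for necessity \textrm{(b)}$\Rightarrow$\textrm{(c)} I would test the boundedness of $P_\om$ on a family adapted to the points $a\in\D$, exploiting that $P_\om$ fixes $B^\om_a$. For each $r\in(0,1)$ one constructs $f_a\in L^p_v$ for which $\|P_\om f_a\|_{L^p_v}$ is bounded below by a quantity comparable to $\|B^\om_a\|_{A^p_v}$, while $\|f_a\|_{L^p_v}$ matches the dual factor; inserting the two-sided estimates of Theorem~\ref{th:kernelstimate}(ii) into $\|P_\om f_a\|_{L^p_v}\lesssim\|f_a\|_{L^p_v}$ then yields the supremum bound \textrm{(c)}. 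The delicate point is to arrange the test family so that $P_\om f_a$ suffers no cancellation, which is why one argues through the reproducing property of $P_\om$ rather than through $P^+_\om$.
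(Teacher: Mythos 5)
The first thing to note is that this survey states Theorem~\ref{theorem:projections3} without proof (it is quoted from \cite{PelRatproj}); the only argument in the paper you can calibrate against is the Schur-test proof of the one-weight case, Theorem~\ref{theorem:projections}(i), and your plan is a natural extrapolation of it. Your easy reductions are fine: (a)$\Rightarrow$(b) is trivial, the Schur framework with kernel $|B^\om(z,\z)|\om(\z)/v(\z)$ on $L^p(v\,dA)$ is set up correctly, and $M_1(s,B^\om_z)\asymp\ho(\min(|z|,s))^{-1}$ is true here --- though the evaluation $\int_0^\rho\frac{dt}{\ho(t)(1-t)}\asymp\ho(\rho)^{-1}$ requires the essential decrease of $\ho(t)(1-t)^{-\a}$, i.e.\ regularity, not merely ``the doubling of $\ho$'' as you write; for general $\om\in\DD$ the upper bound fails (take $\ho(t)=\bigl(\log\frac{e}{1-t}\bigr)^{-1}$). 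The genuine gap is that the two steps carrying all the content are asserted rather than performed. For sufficiency you never exhibit the Schur function $h$: you say the natural candidate inserts ``the competing factor $\widehat{v}$'' and then concede that verifying both Schur inequalities from (e) is ``the main obstacle''. That concession \emph{is} the theorem. The one-weight choice $h=\ho^{-1/(pp')}$ works because of an exact self-similarity ($h^{p'}\om\in\R$ with $\psi_{h^{p'}\om}=\frac{p}{p-1}\psi_\om$, which yields \eqref{1111}); in the two-weight case one must commit to a concrete $h$ --- e.g.\ a suitable power of $1+\int_0^{|z|}\frac{v(s)}{\om(s)^p(1-s)^p}\,ds$, as in Schur-type proofs of Muckenhoupt's one-dimensional theorem --- and check that (e) closes both estimates; nothing in your text does this. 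Likewise, for (b)$\Rightarrow$(c) no test family is constructed, and the cancellation problem you flag cannot be left ``to be arranged'': since the angular mean of $B^\om_z$ is the constant $1/(2\om_1)$, the operator $P_\om$ maps every \emph{radial} function to a constant, so the one-dimensional model that motivates (e) is invisible to $P_\om$ on the obvious test functions; an explicit non-radial family, or a duality argument pairing against the kernels and invoking Theorem~\ref{th:kernelstimate}(ii), is indispensable.

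There is also an error of principle in your treatment of (c)$\Leftrightarrow\cdots\Leftrightarrow$(g). The asymptotic $\int_0^r\frac{g(s)}{1-s}\,ds\asymp g(r)$ is false in general ($g\equiv1$ gives $\log\frac{1}{1-r}$); it holds only when $g$ is comparable to an essentially monotone positive power of $\frac{1}{1-s}$. After substituting $\om(s)\asymp\ho(s)/(1-s)$ and $v(s)\asymp\widehat{v}(s)/(1-s)$, the integrands in (c)--(g) are \emph{mixed ratios} such as $\widehat{v}(s)/\ho(s)^p$ and $\ho(s)^{p'}/\widehat{v}(s)^{p'-1}$, and the separate monotonicity properties of $\ho$ and $\widehat{v}$ from Lemma~\ref{Lemma:replacement-Lemmas-Memoirs} give no power-like behavior for these ratios. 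Indeed, if your evaluation scheme were valid unconditionally it would render each of (c)--(g) automatically finite for every pair of regular weights (e.g.\ it would ``evaluate'' $\int_0^r\frac{\widehat{v}(s)}{\ho(s)^p(1-s)}\,ds\asymp\frac{\widehat{v}(r)}{\ho(r)^p}$, making (d) read $\sup_r 1<\infty$), which is absurd: $\om\equiv1$ and $v(s)=(1-s)^K$ with $K>p-1$ are regular and violate all seven conditions --- this is exactly the radial Bekoll\'e--Bonami threshold. The correct mechanism is self-improvement: for instance, (d) says $h(r)\le C(1-r)h'(r)$ for $h(r)=\int_0^r\frac{v(s)}{\om(s)^p(1-s)^p}\,ds$, and integrating this differential inequality gives $h(t)\ge h(r)\left(\frac{1-r}{1-t}\right)^{1/C}$, i.e.\ power-like growth extracted \emph{from the assumed condition}; only then can the remaining integrals be evaluated and the chain closed. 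In short: your architecture is the right one, but the Schur function, the test family, and the self-improvement lemma --- the three substantive ingredients --- are all missing, and the one lemma you do state explicitly is false as stated.
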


It is worth noticing that condition (f) above makes sense also for $p=1$, and it turns out to be the  condition
 that describes those regular weights such that $P_\om$ is bounded on $L^1_v$ \cite{PelRatproj}.

\begin{theorem}\label{theorem:projections5}
Let $\om,v\in\R$. Then the following conditions are equivalent:
\begin{itemize}
\item[\rm(a)] $P_\om:L^1_v\to L^1_v$ is bounded;
\item[\rm(b)] $P^+_\om:L^1_v\to L^1_v$ is bounded;
\item[\rm(c)] $\displaystyle \sup_{0<r<1}\frac{\om(r)}{v(r)}\int_0^r\frac{\widehat{v}(s)}{\widehat{\om}(s)(1-s)}\,ds<\infty$;
\item[\rm(d)]$\displaystyle   \sup_{0<r<1}\frac{\widehat{v}(r)}{\widehat{\om}(r)}\int_r^1\frac{\om(s)}{v(s)(1-s)}\,ds<\infty$.
\end{itemize}
\end{theorem}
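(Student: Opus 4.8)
The plan is to collapse the whole statement onto one auxiliary function and the kernel estimate of Theorem~\ref{th:kernelstimate}(ii). For $w\in\D$ put
\[
\Psi(w)=\frac{\om(w)}{v(w)}\,\|B^\om_w\|_{A^1_v}=\frac{\om(w)}{v(w)}\int_\D|B^\om_w(z)|\,v(z)\,dA(z).
\]
Theorem~\ref{th:kernelstimate}(ii) with $p=1$ and $N=0$ gives $\Psi(w)\asymp\frac{\om(|w|)}{v(|w|)}\int_0^{|w|}\frac{\widehat{v}(t)}{\widehat{\om}(t)(1-t)}\,dt$, so $\sup_{w\in\D}\Psi(w)<\infty$ is exactly condition~(c). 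First I would prove (c)$\Rightarrow$(b): Tonelli applied to the positive kernel of $P^+_\om$ yields
\[
\|P^+_\om f\|_{L^1_v}=\int_\D|f(w)|\,\om(w)\Big(\int_\D|B^\om_w(z)|\,v(z)\,dA(z)\Big)dA(w)=\int_\D|f(w)|\,\Psi(w)\,v(w)\,dA(w),
\]
whence $\|P^+_\om\|_{L^1_v\to L^1_v}=\sup_w\Psi(w)$, and so (c) gives (b). The implication (b)$\Rightarrow$(a) is immediate from the pointwise domination $|P_\om f|\le P^+_\om(|f|)$, which moreover yields $\|P_\om\|_{L^1_v\to L^1_v}\le\|P^+_\om\|_{L^1_v\to L^1_v}=\sup_w\Psi(w)$.

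Next I would close the circle with (a)$\Rightarrow$(c) by duality. Identifying $(L^1_v)^\ast=L^\infty$ via the bilinear pairing $[f,g]_v=\int_\D fg\,v\,dA$, boundedness of $P_\om$ on $L^1_v$ is equivalent to boundedness of its transpose on $L^\infty$ with the same norm; a Fubini computation (justified first on bounded, compactly supported data and then extended) and the identity $\overline{B^\om_z(w)}=B^\om_w(z)$ identify this transpose as $g\mapsto\frac{\om(w)}{v(w)}\int_\D B^\om_w(z)\,g(z)\,v(z)\,dA(z)$. Testing it, for each fixed $w$, against the unimodular function $g_w=\overline{B^\om_w}/|B^\om_w|\in L^\infty$ makes the transpose evaluated at $w$ equal exactly $\Psi(w)$, so $\sup_w\Psi(w)\le\|P_\om\|_{L^1_v\to L^1_v}$. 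Together with the previous paragraph this gives the chain $\sup_w\Psi(w)\le\|P_\om\|\le\|P^+_\om\|=\sup_w\Psi(w)$, so that (a), (b) and (c) are equivalent with comparable operator norms. I would also record that $\|B^\om_w\|_{A^1_v}<\infty$ for each fixed $w$, so every integral above is finite a priori and the manipulations are legitimate.

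It then remains to prove (c)$\Leftrightarrow$(d), which is pure weight calculus. Since $\om,v\in\R$, regularity gives $\om(r)\asymp\widehat{\om}(r)/(1-r)$ and $v(r)\asymp\widehat{v}(r)/(1-r)$, hence $\om(r)/v(r)\asymp\widehat{\om}(r)/\widehat{v}(r)$ and likewise inside the integrals. Writing $\Phi=\widehat{\om}/\widehat{v}$, condition (c) becomes $\sup_r\Phi(r)\int_0^r\frac{ds}{\Phi(s)(1-s)}<\infty$ and condition (d) becomes $\sup_r\frac1{\Phi(r)}\int_r^1\frac{\Phi(s)}{1-s}\,ds<\infty$, which are the Hardy averaging condition and its transpose. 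Their equivalence I would obtain by integration by parts together with the power-type bounds on $\widehat{\om}$ and $\widehat{v}$ supplied by Lemma~\ref{Lemma:replacement-Lemmas-Memoirs}(ii) and the regularity of $\om,v$ (which confine $\Phi$ between two powers of $1-r$ up to a slowly varying factor), exactly mirroring the passage between conditions (c)--(g) in Theorem~\ref{theorem:projections3}.

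The functional-analytic skeleton above is essentially forced once Theorem~\ref{th:kernelstimate}(ii) is in hand, so I do not expect it to be the hard part. The two genuine difficulties are external to this argument or classical: first, the two-sided kernel estimate itself, i.e.\ the reduction of $\|B^\om_w\|_{A^1_v}$ to the explicit integral $\int_0^{|w|}\widehat{v}/(\widehat{\om}(1-t))\,dt$, which is the substance of Theorem~\ref{th:kernelstimate} and which I am assuming; and second, the weight equivalence (c)$\Leftrightarrow$(d), where the only care needed is to keep the constants controlled through the integration by parts using the doubling and regularity bounds rather than any monotonicity of $\om,v$ or $\Phi$ that is not actually available.
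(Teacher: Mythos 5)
The survey states Theorem~\ref{theorem:projections5} without proof, deferring entirely to \cite{PelRatproj}; the only argument of this genre actually carried out in the paper is the Schur-test proof of Theorem~\ref{theorem:projections}(i) for $1<p<\infty$. Your skeleton is the natural endpoint substitute for that proof and it is sound: Schur's test is unavailable at $p=1$, but since the kernel of $P^+_\om$ is positive and $|B^\om_z(w)|=|B^\om_w(z)|$ by \eqref{repker}, Tonelli gives the exact identity $\|P^+_\om f\|_{L^1_v}=\int_\D|f|\,\Psi\,v\,dA$, hence $\|P^+_\om\|_{L^1_v\to L^1_v}=\mathop{\rm ess\,sup}\Psi$, and Theorem~\ref{th:kernelstimate}(ii) with $p=1$, $N=0$, together with $\om/v\asymp\widehat{\om}/\widehat{v}$ from regularity, converts this into condition (c). The duality step for (a)$\Rightarrow$(c) and the trivial domination (b)$\Rightarrow$(a) then close the operator-theoretic circle with comparable norms, exactly as you say. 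So the proposal is correct in substance, and it uses the same two pillars the paper advertises (the kernel estimates of Theorem~\ref{th:kernelstimate} and the weight machinery of Lemma~\ref{Lemma:replacement-Lemmas-Memoirs}), with Schur's test replaced by the exact $L^1$ Fubini computation.

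Two places need tightening. First, in (a)$\Rightarrow$(c) the evaluation $P^*_\om g_w(w)=\Psi(w)$ at the \emph{same} point $w$ is not controlled by the bound $\|P^*_\om g_w\|_{L^\infty}\le\|P_\om\|$, because the exceptional null set depends on $g_w$, i.e.\ on $w$, and $\om/v$ is only measurable. The fix is routine but should be said: $w'\mapsto\int_\D g_w(z)B^\om_{w'}(z)v(z)\,dA(z)$ is continuous (dominate by $B^\om_\rho(|z|)v(z)$ for $|w'|\le\rho$, which is integrable by Theorem~\ref{th:kernelstimate}), and one replaces $\om/v$ by the continuous comparand $\widehat{\om}/\widehat{v}$ before letting $w'\to w$ through a full-measure set. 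Second, in (c)$\Leftrightarrow$(d) your parenthetical mechanism is off: confining $\Phi=\widehat{\om}/\widehat{v}$ between two powers of $1-r$ does not transfer one condition to the other unless the two exponents coincide, which is essentially the conclusion rather than a tool. What actually works is the self-improving iteration of Lemma~\ref{Lemma:replacement-Lemmas-Memoirs} ((iii)$\Rightarrow$(ii)): with $G(r)=\int_0^r\frac{ds}{\Phi(s)(1-s)}$, condition (c) reads $G\le C/\Phi$, whence $G'\ge G/(C(1-r))$ and $G(t)\ge G(r)\left(\frac{1-r}{1-t}\right)^{1/C}$; combining with $\Phi(r)G(r)\gtrsim1$, which follows from the Whitney-scale comparability $\Phi(s)\asymp\Phi(r)$ for $1-s\asymp1-r$ (doubling of $\widehat{\om},\widehat{v}$), yields (d), and the symmetric Gronwall argument with $H(r)=\int_r^1\frac{\Phi(s)}{1-s}\,ds$ gives the converse; both conditions are in fact equivalent to $\Phi(r)(1-r)^{-\e}$ being essentially decreasing for some $\e=\e(\om,v)>0$. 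Note also that the finiteness of $\int_r^1\frac{\om(s)}{v(s)(1-s)}\,ds$ is part of condition (d) and is exactly what fails in borderline examples, so it cannot be assumed a priori in the converse direction.
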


\end{document}